\theoremstyle{definition}
\newtheorem{definition}{Definition}%
\theoremstyle{plain}
\newtheorem{theorem}[definition]{Theorem}
\newtheorem{lemma}[definition]{Lemma}
\newtheorem{proposition}[definition]{Proposition}
\newtheorem*{problem}{Problem}
\newcommand{\GF}[1]{\mathsf{GF}(#1)}
\newcommand{\Aut}{\mathrm{Aut}}
\newcommand{\PS}{\mathcal{P}\mathcal{S}}
\newcommand{\nl}{\mathrm{nl}}
\newcommand{\dA}[1]{d_H(#1,\mathcal{A})}
\title{On the minimum Hamming distance between vectorial Boolean and affine functions}
\author{G\'abor P. Nagy}
\address{Institute of Mathematics \\
	Budapest University of Technology and Economics\\
	M\H{u}egyetem rkp 3\\
	H-1111 Budapest, Hungary}
\address{Bolyai Institute \\
	University of Szeged \\
	Aradi v\'ertan\'uk tere 1\\
	H-6720 Szeged, Hungary}
\email{nagyg@math.u-szeged.hu}
\thanks{Support provided from the Program of Excellence TKP2021-NVA-02 at the Budapest University of Technology and Economics.}
\keywords{Vectorial Boolean functions, APN functions, vectorial bent functions, Hamming distance, distance to affine functions, Sidon sets, finite quasifields}
\subjclass[2010]{06E30,94A60,05B25}
\begin{document}

\begin{abstract}
In this paper, we study the Hamming distance between vectorial Boolean functions and affine functions. This parameter is known to be related to the non-linearity and differential uniformity of vectorial functions, while the calculation of it is in general difficult. In 2017, Liu, Mesnager and Chen conjectured an upper bound for this metric. We prove this bound for two classes of vectorial bent functions, obtained from finite quasigroups in characteristic two, and we improve the known bounds for two classes of monomial functions of differential uniformity two or four. For many of the known APN functions of dimension at most nine, we compute the exact distance to affine functions.
\end{abstract}

\maketitle

\section{Introduction}

Vectorial Boolean functions $f:\GF{2}^n\to \GF{2}^m$, also called \textit{substitution boxes,} play a central role in symmetric key block ciphers. Because they are the only non-linear components of the ciphers, they provide \textit{confusion.} The study of the nonlinear properties of vectorial Boolean functions is fundamental for the evaluation of the resistance of the block cipher against the main attacks, such as the differential attack and the linear attack; see \cite{Matsui1994, Biham1991}. The two main metrics of these nonlinear properties are the \textit{differential uniformity $\delta_f$} (the lower is the less linear), and the \textit{nonlinearity $\nl(f)$.} The \textit{Walsh-Hadamard transform} provides an effective tool for computation with nonlinearity $\nl(f)$. 

In this paper, we deal with a third metric, which is called by different names in the literature: \textit{vectorial nonlinearity} $\mathrm{NL}_{\boldmath{v}}(f)$ by the author of this paper \cite{Nagy2022}, \textit{distance to affine functions} $d_H(f,\mathcal{A})$ by Carlet \cite{Carlet2021b}, \textit{second type of nonlinearity} $\mathcal{N}_\text{v}$ by Liu, Mesnager, and Chen \cite{Liu2017}. This metric should not be confused with the notion of \textit{$r$th-order nonlinearity} of Boolean functions, which is the minimum Hamming distance of $f$ from the set of all $n$-variable Boolean functions having an algebraic degree at most $r$, see \cite{Saini2024}. 

In this paper, we use the terminology ``distance to affine function''. For a map $f:\GF{2}^n\to \GF{2}^m$, we define $\dA{f}$ as the Hamming distance of $f$ to the set $\mathcal{A}$ of affine maps $\GF{2}^n\to \GF{2}^m$. The computation of $\dA{f}$ is generally difficult. In addition, it is difficult to give non-trivial lower and upper bounds for $\dA{f}$. The distance to affine functions is linked to the nonlinearity by the inequality
\begin{align*} %
\dA{f} \geq \nl(f),
\end{align*}
that holds for all $f:\GF{2}^n\to \GF{2}^m$; see \cite{Liu2017}. For recent results on bounds involving nonlinearity and differential uniformity, see \cite{Charpin2019,Xu2018}. Using the concept of Sidon sets and $t$-thin sets, the lower bound
\begin{align} \label{eq:NGP-bound}
\dA{f}\geq 2^n - \sqrt{\delta_f}\cdot 2^{n/2}-\frac{1}{2}
\end{align}
has been shown in \cite{Nagy2022} by the author of this paper, and independently by Ryabov \cite{Ryabov2023}. Concerning upper bounds, we have the trivial inequality
\begin{align*} %
\dA{f}\leq 2^n-n-1.
\end{align*}
The function $f:\GF{2}^n\to \GF{2}^m$ is called an $(n,m)$-bent function, if its nonlinearity achieves the the covering radius upper bound:
\begin{align*} %
\nl(f)=2^{n-1}-2^{n/2-1}.
\end{align*}
For the class of $(n,m)$-bent functions, Carlet, Ding and Yuan \cite{Carlet2005} proved
\begin{align} \label{eq:CDY-2005}
\left(1-\frac{1}{2^m}\right)\big(2^n-2^{n/2}\big)\leq \dA{f} \leq \left(1-\frac{1}{2^m}\right)\big(2^n+2^{n/2}\big).
\end{align}
In 2017, Liu, Mesnager and Chen formulated a conjecture:
\begin{problem}[Liu-Mesnager-Chen Conjecture \cite{Liu2017}]
For any map $f:\GF{2}^n\to \GF{2}^m$, 
\begin{align} \label{eq:LMCconj}
\dA{f}\leq \left(1-\frac{1}{2^m}\right)\big(2^n-2^{n/2}\big)
\end{align}
holds. In particular, if $n=m$, then
\begin{align} \label{eq:LMCconj2APN}
\dA{f}\leq 2^n-2^{n/2}-1.
\end{align}
\end{problem}
The covering radius bound implies the conjecture for $m=1$. For $(n,m)$-bent functions, the conjecture implies the exact value of the distance from affine functions:
\begin{align} \label{eq:bent-true}
\dA{f}=\left(1-\frac{1}{2^m}\right)\big(2^n-2^{n/2}\big).
\end{align}

\begin{theorem} \label{thm:main-bent}
Let $t\leq m$ be positive integers and $(\GF{2^m},+,\star)$ be a left pre-quasifield. Let the maps $\gamma, \tau, h:\GF{2^m}\to \GF{2^t}$ and $\sigma:\GF{2^m}\to \GF{2^m}$ be such that $\gamma$ is balanced, $\tau$ is surjective linear, $\sigma$ is invertible, and $h$ is arbitrary. Define the $(2m,t)$-bent functions $f_1,f_2$ as
\begin{align*} 
f_1(x,y)&=\gamma\left(\star\frac{x}{y}\right), \\
f_2(x,y)&=\tau\left(\sigma(y) \star{x}\right) +h(y).
\end{align*}
Then
\[\dA{f_1} = \dA{f_2} = \left(1-\frac{1}{2^t}\right)\big(2^{2m}-2^{m}\big).\]
\end{theorem}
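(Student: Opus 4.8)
The plan is to sandwich $\dA{f_i}$ between two bounds that coincide. The lower bound comes for free: since $f_1$ and $f_2$ are $(2m,t)$-bent, inequality \eqref{eq:CDY-2005} applied with $(2m,t)$ in the roles of $(n,m)$ gives $\dA{f_i}\geq\left(1-2^{-t}\right)\big(2^{2m}-2^{m}\big)$ for $i=1,2$. So the real task is the matching upper bound --- which is precisely the Liu--Mesnager--Chen conjecture \eqref{eq:LMCconj} for these functions, and since the $f_i$ are bent, \eqref{eq:bent-true} then pins down the exact value. For the upper bound it suffices to exhibit a single affine map $\ell\colon\GF{2}^{2m}\to\GF{2}^{t}$ that agrees with $f_i$ on at least
\[
N:=2^{m}+(2^{m}-1)\,2^{m-t}=2^{2m-t}+2^{m}-2^{m-t}
\]
points, because $2^{2m}-N=\left(1-2^{-t}\right)\big(2^{2m}-2^{m}\big)$.

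I would take $\ell$ to be a \emph{constant} map and count the coincidences with $f_i$ slice by slice over $y\in\GF{2^m}$. The facts I need from the left pre-quasifield $(\GF{2^m},+,\star)$ are that left distributivity makes each left translation $L_a\colon x\mapsto a\star x$ additive, that $L_a$ is bijective for $a\neq0$, and that $0\star x=0$. Consequently, for $y\neq0$ the map $x\mapsto\star\frac{x}{y}$ is an additive bijection of $\GF{2^m}$ (it inverts the left translation $z\mapsto y\star z$), so the slice $x\mapsto f_1(x,y)=\gamma\!\left(\star\frac{x}{y}\right)$ is the balanced map $\gamma$ composed with a linear bijection, hence attains every value of $\GF{2^t}$ exactly $2^{m-t}$ times. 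Likewise, whenever $\sigma(y)\neq0$ the slice $x\mapsto f_2(x,y)=\tau(\sigma(y)\star x)+h(y)$ is the surjective linear map $\tau$ composed with a linear bijection and then translated, so it too attains every value exactly $2^{m-t}$ times. The exceptional slices are $y=0$ for $f_1$, where the convention $\star\frac{x}{0}=0$ forces $f_1(x,0)=\gamma(0)$ for all $x$, and $y=y_0:=\sigma^{-1}(0)$ for $f_2$, where $0\star x=0$ forces $f_2(x,y_0)=h(y_0)$ for all $x$: on these slices the function is constant.

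Now I would set $\ell\equiv\gamma(0)$ in the case of $f_1$ and $\ell\equiv h(y_0)$ in the case of $f_2$. Over the exceptional value of $y$ the constant $\ell$ coincides with $f_i$ at all $2^m$ points; over each of the remaining $2^m-1$ values of $y$ it coincides at exactly $2^{m-t}$ points by the balancedness just established. Summing over $y$ yields $N$ agreements, hence $\dA{f_i}\leq 2^{2m}-N=\left(1-2^{-t}\right)\big(2^{2m}-2^{m}\big)$; combined with the Carlet--Ding--Yuan lower bound this gives the claimed equality.

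The estimates themselves are routine bookkeeping once the slice structure is in place, so the point to treat carefully is the role of the single constant slice. If all $2^m$ slices of $f_i$ were balanced onto $\GF{2^t}$, a constant affine map would agree in only $2^{2m-t}$ points, short of $N$ by $2^m-2^{m-t}>0$; it is exactly the degenerate fiber --- ``division by $0$'' for $f_1$, and $\sigma(y)=0$ for $f_2$ --- contributing $2^m$ rather than $2^{m-t}$ that makes the bound tight. Thus the argument hinges on pinning down the definitions so that this fiber really is constant, and on using precisely the pre-quasifield axioms available (additivity of $L_a$ from left distributivity, bijectivity for $a\neq0$, and $0\star x=0$); bentness of $f_1,f_2$, invoked only through \eqref{eq:CDY-2005}, is taken as known for these quasifield/spread constructions.
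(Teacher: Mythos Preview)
Your proof is correct and follows the same strategy as the paper: invoke the Carlet--Ding--Yuan lower bound \eqref{eq:CDY-2005}, then get the matching upper bound by comparing $f_i$ to a \emph{constant} function and counting agreements slice by slice. One small slip worth fixing: for $y\neq0$, the map $x\mapsto\star\frac{x}{y}$ inverts \emph{right} multiplication $a\mapsto a\star y$ (by the very definition of the division), not the left translation $L_y\colon z\mapsto y\star z$; in a left pre-quasifield only left distributivity is assumed, so this inverse need not be additive. This does not hurt your argument --- all you actually use is that it is a bijection (property~(3)), and $\gamma$ composed with a bijection is still balanced. Your choice of constant $h(\sigma^{-1}(0))$ for $f_2$ differs from the paper's $f_2(0,0)=h(0)$ but works just as well, and the slice-by-slice bookkeeping is arguably cleaner than the paper's preimage count once the arbitrary $h$ is in play.
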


\begin{theorem} \label{thm:main-dH-bounds}
Let $n$ be a positive integer.
\begin{enumerate}[(i)]
\item If $n$ is even and $h_1(x)=x^{2^n-2}$, then 
\begin{align*}
2^n-\sqrt{2}\cdot 2^{n/2}-\frac{3}{2} \leq \dA{h_1}\leq 2^n-2^{n/2}-2.
\end{align*}
\item If $n$ is divisible by $4$, $d=2^{n/2-1}+1$, and $h_2(x)=x^d$, then
\begin{align*}
2^n-\sqrt{2}\cdot 2^{n/2}-\frac{1}{2} \leq \dA{h_2}\leq 2^n-2^{n/2}-2.
\end{align*}
\end{enumerate}
In particular, the Liu-Mesnager-Chen Conjecture holds for both functions $h_1(x)$, $h_2(x)$. 
\end{theorem}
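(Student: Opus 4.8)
The two functions are power maps, and in both parts the plan is to bracket $\dA{\cdot}$ from below by (a strengthening of) the bound \eqref{eq:NGP-bound} and from above by exhibiting a single explicit affine map --- in fact a linearized monomial --- that agrees with the function on a set of size $2^{n/2}+2$. For the upper bounds the common mechanism is the group $\mu$ of $(2^{n/2}+1)$-st roots of unity in $\mathbb{F}_{2^n}$, which has exactly $2^{n/2}+1$ elements since $2^{n/2}+1\mid 2^n-1$. For $h_1(x)=x^{2^n-2}=x^{-1}$ (with $h_1(0)=0$), take $\ell(x)=x^{2^{n/2}}$: then $h_1(x)=\ell(x)$ reads $x^{2^{n/2}+1}=1$ on $\mathbb{F}_{2^n}^*$, so it holds precisely on $\mu\cup\{0\}$; hence $d_H(h_1,\ell)\le 2^n-(2^{n/2}+2)$ and $\dA{h_1}\le 2^n-2^{n/2}-2$. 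For $h_2(x)=x^d$ with $d=2^{n/2-1}+1$ take $\ell(x)=x^{2^{n-1}}$, the inverse Frobenius (the square-root map), which is $\mathbb{F}_2$-linear; then $h_2(x)=\ell(x)$ is equivalent to $x^{2d-1}=1$, and $2d-1=2^{n/2}+1$, so again the solution set is $\mu\cup\{0\}$ and $\dA{h_2}\le 2^n-2^{n/2}-2$.

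For the lower bound on $h_2$ I would first check that it is APN. Writing $d=2^k+1$ with $k=n/2-1$, the hypothesis $4\mid n$ makes $n/2$ even, so $k$ is odd and coprime to both $2$ and $n/2$; hence $\gcd(k,n)=1$ and $h_2$ is a Gold APN function with $\delta_{h_2}=2$. Then \eqref{eq:NGP-bound} gives $\dA{h_2}\ge 2^n-\sqrt2\cdot 2^{n/2}-\tfrac12$, exactly as claimed. The delicate part is the lower bound for $h_1$, where $\delta_{h_1}=4$ and the raw bound \eqref{eq:NGP-bound} is too weak. Here I would exploit the differential spectrum of the inverse function: for $n$ even and $a\ne 0$ one has $\delta_{h_1}(a,b)=4$ only for $b=a^{-1}$ and $\delta_{h_1}(a,b)\le 2$ otherwise (for $x\notin\{0,a\}$ the equation $x^{-1}+(x+a)^{-1}=b$ reduces, via $x=az$, to $z^2+z+\frac1{ab}=0$, giving $0$ or $2$ solutions; the extra pair $x\in\{0,a\}$ occurs exactly when $b=a^{-1}$, together with the two roots of $z^2+z+1=0$, which exist because $\tr(1)=0$ for $n$ even). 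Now fix an affine $\ell$ with linear part $L$ realizing $d_H(h_1,\ell)=\dA{h_1}$, set $T=\{x:h_1(x)=\ell(x)\}$ and $k=|T|=2^n-\dA{h_1}$. Counting ordered pairs of distinct elements of $T$ by their sum $u=x+x'\ne 0$, and noting that such a pair forces $h_1(x)+h_1(x+u)=L(u)$, one obtains
\[
k(k-1)=\sum_{u\ne0}\bigl|\{x\in T:x+u\in T\}\bigr|\le 4|B|+2\bigl(2^n-1-|B|\bigr),
\]
where $B=\{u\ne0:L(u)=u^{-1}\}$. Since $B\cup\{0\}=\{x:L(x)=h_1(x)\}$ and $L$ is affine, $|B|\le 2^n-d_H(h_1,L)-1\le 2^n-\dA{h_1}-1=k-1$. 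Substituting yields the quadratic inequality $k^2-3k+4\le 2^{n+1}$, hence $k\le\frac{3+\sqrt{2^{n+3}-7}}{2}<\frac32+\sqrt2\cdot 2^{n/2}$ and therefore $\dA{h_1}=2^n-k>2^n-\sqrt2\cdot 2^{n/2}-\frac32$.

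The main obstacle is this last, self-referential counting step: one must bound the ``bad direction'' count $|B|$ by $\dA{h_1}$ itself (using that the set of directions with $\delta_{h_1}=4$ is exactly the graph of $h_1$, a Sidon-type thin set), feeding the target quantity back into the estimate to produce a quadratic in $k$. The bookkeeping --- ordered versus unordered pairs, the harmless role of $x=0$, and the fact that the \emph{same} optimal $\ell$ controls both $k$ and $|B|$ --- must be handled carefully. Once both inequalities are in place, $2^n-2^{n/2}-2\le 2^n-2^{n/2}-1$ shows that the Liu--Mesnager--Chen bound \eqref{eq:LMCconj2APN} holds for $h_1$ and $h_2$.
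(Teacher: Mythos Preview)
Your argument is correct. The upper bounds (via the linearized monomials $x^{2^{n/2}}$ and $x^{2^{n-1}}$) and the lower bound for $h_2$ (via the APN/Sidon bound \eqref{eq:NGP-bound}) coincide with the paper's proof verbatim.

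The lower bound for $h_1$ is where you diverge. The paper observes that the graph of $h_1$ equals $H\cup\{(0,0)\}$ with $H=\{(x,x^{-1}):x\neq 0\}$ the hyperbola $XY=1$, quotes that $H$ is a Sidon set (Proposition~12 of \cite{Nagy2022}), and then applies the obvious Sidon bound inside the affine subspace $\mathcal{G}_A$ to get $|\mathcal{G}_{h_1}\cap\mathcal{G}_A|\le \sqrt{2}\cdot 2^{n/2}+\tfrac32$ directly. Your route is more hands-on: you compute the differential spectrum of the inverse function (which is, of course, the content of the hyperbola being Sidon away from $0$), and then run a self-referential counting argument, bounding the number $|B|$ of ``bad'' directions $u$ with $L(u)=u^{-1}$ by $k-1$ because $L$ itself is affine. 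This feeds $\dA{h_1}$ back into its own estimate and produces the quadratic $k^2-3k+4\le 2^{n+1}$. The paper does not need this feedback step since it treats $(0,0)$ as a single extra point outside a genuine Sidon set; your argument instead absorbs the effect of $(0,0)$ into the count of directions with $\delta=4$. What you gain is a fully self-contained proof that does not invoke the external Sidon result for the hyperbola; what the paper gains is a cleaner one-line application of the Sidon framework that makes the ``$+1$'' for the origin transparent. Numerically your quadratic even yields a marginally sharper constant ($\sqrt{2^{n+3}-7}$ versus $\sqrt{2^{n+3}}$), though this does not change the stated bound.
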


Notice that $h_1(x)$ is Nyberg's Field Inverse Function, used in the AES cryptosystem with $n=8$. In even dimension, the Field Inverse Function has differential uniformity $4$. The function $h_2(x)$ is a monomial APN function of Gold type, and the exponent is $d=2^k+1$, where $k=n/2-1$. As $n/2$ is even, $k$ is odd and $\gcd(k,n)=\gcd(n/2-1,n)=1$. The APN property of $h_2(x)$ follows. 

At the end of the paper, we will present new results for the minimum Hamming distance between affine and APN functions of dimension $n=6,7,8,9$.

\section{Preliminaries}

Let $A,B$ be finite sets. The function $f:A\to B$ is \textit{balanced,} if the preimage $f^{-1}(b)$ has size $|A|/|B|$ for all $b\in B$. The \textit{Hamming distance} $d_H(f,g)$ between two functions $f,g: A\to B$ is 
defined as the number of inputs at which they differ:
\[ d_H(f,g) = |\{a\in A : f(a) \neq g(a)\}|. \]
We say that a function $f$ is \textit{linear,} if its definition domain is a vector space over $\GF{2}$, and the function is $\GF{2}$-linear. The sum of a linear function and a constant is called an \textit{affine function.} The space of affine functions $\GF{2}^n\to \GF{2}^m$ is denoted by $\mathcal{A}_{n,m}$, or simply $\mathcal{A}$. 

Let $q$ be a prime power, and $t$ a positive integer. For $x,y\in \GF{q}^t$, $x\cdot y$ denotes the (non-degenerate) \textit{inner product} $x_1y_1+\cdots+x_ty_t$. If we identify the $\GF{q}$-vector spaces $\GF{q^t}$ and $\GF{q}^t$, then the inner product can be written as $\mathrm{Tr}_{\GF{q^t}/\GF{q}}(xy)$. The map
\[\mathrm{Tr}_{\GF{q^t}/\GF{q}}(x)=x+x^q+\cdots+x^{q^{t-1}}\]
is the \textit{trace map} of the field extension $\GF{q^t}/\GF{q}$. 

The \textit{nonlinearity} of a Boolean function $f:\GF{2}^n \to \GF{2}$ is defined as the minimum Hamming distance between $f$ and all affine Boolean functions in $n$ variables, denoted by $d_H(f,\mathcal{A})$, where $\mathcal{A}$ is the set of all affine functions on $n$ variables. The nonlinearity of a Boolean function can be expressed as:
\[\nl(f) = d_H(f,\mathcal{A}) = 2^{n-1} - \frac{1}{2} \max_{a \in \GF{2}^n} |W_f(a)|,\]
where $W_f(a)$ is the \textit{Walsh-Hadamard transform} of $f$:
\[W_f(a) = \sum_{x \in \GF{2}^n} (-1)^{f(x)+a \cdot x}, \qquad a \in \GF{2}^m.\]
Parseval's relation $\sum_{a \in \GF{2}^n} W_f^2(a)=2^{2n}$ implies 
\[\nl(f)\leq 2^{n-1}-2^{n/2-1}\]
for every $n$-variable Boolean function $f$. This bound, tight for every even $n$, is called the \textit{covering radius bound.} Functions achieving this bound are called \textit{$n$-variable bent functions.} The \textit{Maiorana-McFarland class} is the most important direct construction of bent functions. Let $\pi$ be an arbitrary permutation on the set $\GF{2}^{n/2}$, and let $h$ be an arbitrary Boolean function in $n/2$ variables, $x,y\in \GF{2}^{n/2}$. Then 
\[f(x,y)=x\cdot \pi(y)+ h(y)\]
is a bent function in $n$ variables. 

\textit{Vectorial Boolean functions,} also called \textit{$(n,m)$-functions,} are functions from $\GF{2}^n \to \GF{2}^m$. If $v\in \GF{2}^m\setminus\{0\}$, then the Boolean function $v\cdot f:\GF{2}^n\to \GF{2}$, $(v\cdot f)(x)=v\cdot f(x)$ is called a \textit{component Boolean function} of $f$. The \textit{nonlinearity} of $f$ is the minimum distance between its component Boolean functions and affine Boolean functions:
\[\nl(f)=\min_{\substack{v\in \GF{2}^m\setminus\{0\} \\ \alpha\in \mathcal{A}_{n,1}}} d_H(v\cdot f,\alpha).\]
By the covering radius bound, $\nl(f)\leq 2^{n-1}-2^{n/2-1}$. The functions achieving this bound are called \textit{$(n, m)$-bent functions,} or \textit{vectorial bent functions,} or \textit{bent vectorial functions.} It is known that $(n, m)$-bent functions exist only if $n$ is even and $m \leq n/2$. 

The \textit{Walsh transform} of an $(n,m)$-function $f$ is defined as
\[W_f(a,b)=\sum_{x \in \GF{2}^n} (-1)^{b\cdot f(x)+a \cdot x},\]
where $a\in \GF{2}^n$, $b\in \GF{2}^m$. For the nonlinearity, we have
\begin{align*}
\nl(f)&=2^{n-1} - \frac{1}{2} \max_{\substack{a\in \GF{2}^n \\ b\in \GF{2}^m\setminus\{0\}}} |W_f(a,b)|\\
&= 2^{n-1} - \frac{1}{2} \max_{\substack{(a,b)\in \GF{2}^m\times \GF{2}^n\\ (a,b)\neq (0,0) }} |W_f(a,b)|.
\end{align*}

For any vectorial Boolean function $f:\GF{2}^n\to \GF{2}^m$, $a\in \GF{2}^n\setminus\{0\}$, and $b\in \GF{2}^m$, we set
\[\delta(a,b)=|\{x\in \GF{2}^n \mid f(x+a)+f(x)=b\}|.\]
The \textit{differential uniformity} $\delta_f$ of $f$ is defined as 
\[\delta_f=\max_{\substack{a\in \GF{2}^n\setminus\{0\} \\ b\in \GF{2}^m }} \delta(a,b).\]
We also say that $f$ is \textit{differentially $\delta$-uniform,} if $\delta_f\leq \delta$. $\delta_f$ is a positive even integer. When $\delta_f = 2$, the possible smallest value, $f$ is said to be \textit{almost perfect nonlinear} (APN for short). APN functions are a special class of vectorial Boolean functions with the highest possible resistance to differential cryptanalysis. They play an important role in the design of S-boxes for symmetric-key cryptosystems, and they have many desirable cryptographic properties. However, their implementation can be more complex than other types of functions, due to their larger S-boxes.

There are $6$ known infinite classes of pairs $(n,d)$ such that the power function $f(x)=x^d$ is APN over the finite field $\GF{2^n}$. An \textit{APN Gold function} is a power function of the form $f(x)=x^{2^k+1}$ over $\GF{2^n}$, where $\gcd(n,k)=1$. APN Gold functions take a special place among APN functions.

The main notions of equivalence of Boolean and vectorial Boolean functions are as follows: 
\begin{enumerate}
\item Two $(n,m)$-functions $f,g$ are \textit{linearly equivalent,} if there are invertible linear function $L:\GF{2}^n\to\GF{2}^n$, $L':\GF{2}^m\to\GF{2}^m$ such that $g=L'\circ f \circ L$. 
\item Two $(n,m)$-functions $f,g$ are \textit{affine equivalent,} if there are invertible affine functions $L:\GF{2}^n\to\GF{2}^n$, $L':\GF{2}^m\to\GF{2}^m$ such that $g=L'\circ f \circ L$. 
\item Two $(n,m)$-functions $f,g$ are \textit{extended affine (EA) equivalent,} if $g=L'\circ f \circ L+L''$ holds with invertible affine functions $L:\GF{2}^n\to\GF{2}^n$, $L':\GF{2}^m\to\GF{2}^m$, and arbitrary affine $(n,m)$-function $L''$. 
\item Two $(n,m)$-functions $f,g$ are \textit{CCZ-equivalent,} if there is an invertible affine map $LL:\GF{2}^n\times \GF{2}^m\to\GF{2}^n\times \GF{2}^m$ which maps the graph
\[\mathcal{G}_f=\{(x,f(x)) \mid x\in \GF{2}^n\}\]
of $f$ to the graph
\[\mathcal{G}_g=\{(x,g(x)) \mid x\in \GF{2}^n\}\]
of $g$. 
\end{enumerate}
With fixed $n,m$, each equivalence relation above is a strict particular case of the next one, see page 29 in \cite{carlet2020}. Therefore, any EA-invariant property or parameter is linearly and affine invariant. The differential uniformity, the nonlinearity of Boolean functions, and the nonlinearity of vectorial Boolean functions are CCZ-invariant parameters. There are some classes of $(n,m)$-functions, where CCZ-equivalence reduces to EA-equivalence, see Yoshiara \cite{Yoshiara2011} for quadratic APN functions, and Theorem 1 in \cite{Budaghyan2011} by Budaghyan and Carlet for $(n,m)$-bent functions.

\begin{problem}
Is the parameter $\dA{f}$ of $(n,m)$-functions CCZ-invariant?
\end{problem}

The following useful formula has been communicated to us by Claude Carlet. 

\begin{lemma} \label{lm:dH-formula}
Let $f$ be an $(n,m)$-function, and let $\mathcal{L}_{n,m}$ denote the space of linear $(n,m)$-functions. Then
\begin{align*}
\dA{f} &= 2^n-2^{-m} \max_{\substack{b\in \GF{2}^m \\ L^*\in\mathcal{L}_{m,n}}} \sum_{v \in \GF{2}^m} (-1)^{v\cdot b} \, W_f(L^*(v),v).
\end{align*}
\end{lemma}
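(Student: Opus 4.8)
The plan is to turn the minimization defining $\dA{f}$ into a maximization of a character sum, by expanding the ``agreement count'' between $f$ and an affine map via the orthogonality relation on $\GF{2}^m$. First I would fix an arbitrary affine map $g=L+c$ with $L\in\mathcal{L}_{n,m}$ and $c\in\GF{2}^m$, and write
\[
d_H(f,g) = 2^n - \bigl|\{x\in\GF{2}^n : f(x)=g(x)\}\bigr|.
\]
The event $f(x)=g(x)$ is the same as $f(x)+L(x)+c=0$ in $\GF{2}^m$, so I would replace its indicator by the standard fact that $2^{-m}\sum_{v\in\GF{2}^m}(-1)^{v\cdot w}$ equals $1$ when $w=0$ and $0$ otherwise (for $w\neq 0$ the functional $v\mapsto v\cdot w$ is nontrivial, hence balanced).

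Substituting this and exchanging the order of summation over $x$ and $v$ gives
\[
\bigl|\{x : f(x)=g(x)\}\bigr| = \frac{1}{2^m}\sum_{v\in\GF{2}^m}(-1)^{v\cdot c}\sum_{x\in\GF{2}^n}(-1)^{v\cdot f(x)+v\cdot L(x)}.
\]
The key observation is that, for fixed $v$, the map $x\mapsto v\cdot L(x)$ is a $\GF{2}$-linear functional on $\GF{2}^n$, hence equals $x\mapsto L^*(v)\cdot x$ for a unique vector $L^*(v)\in\GF{2}^n$; here $L^*\in\mathcal{L}_{m,n}$ is the transpose (adjoint) of $L$ with respect to the standard inner products, i.e.\ $L^*$ has the transposed matrix of $L$. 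With this, the inner sum is exactly $W_f(L^*(v),v)$, so
\[
d_H(f,L+c) = 2^n - \frac{1}{2^m}\sum_{v\in\GF{2}^m}(-1)^{v\cdot c}\,W_f(L^*(v),v).
\]

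Finally, minimizing $d_H(f,g)$ over all affine $g$, that is, over all pairs $(L,c)\in\mathcal{L}_{n,m}\times\GF{2}^m$, amounts to maximizing $2^{-m}\sum_v(-1)^{v\cdot c}W_f(L^*(v),v)$. Since transposition $L\mapsto L^*$ is a bijection from $\mathcal{L}_{n,m}$ onto $\mathcal{L}_{m,n}$ (an involution once both spaces are identified with their duals via the inner product), I may take $L^*$ itself as the optimization variable ranging over all of $\mathcal{L}_{m,n}$, and rename $c$ as $b$; this yields precisely the stated formula. There is no genuine obstacle here — the computation is routine — and the only points that deserve a moment's care are the identification of the inner character sum with a Walsh value $W_f(L^*(v),v)$ through the transpose map, and the fact that $L\mapsto L^*$ is onto $\mathcal{L}_{m,n}$, so that ranging $L^*$ over $\mathcal{L}_{m,n}$ captures every affine map exactly once.
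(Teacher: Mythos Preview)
Your proof is correct and follows essentially the same route as the paper: expand the agreement count via the orthogonality relation $2^{-m}\sum_{v}(-1)^{v\cdot w}$, identify $v\cdot L(x)=L^*(v)\cdot x$ through the adjoint $L^*$, recognize the inner sum as $W_f(L^*(v),v)$, and then minimize over affine maps. If anything, you are slightly more explicit than the paper in noting that $L\mapsto L^*$ is a bijection $\mathcal{L}_{n,m}\to\mathcal{L}_{m,n}$, which justifies letting $L^*$ range freely over $\mathcal{L}_{m,n}$ in the maximum.
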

\begin{proof}
Let $A(x)=L(x)+b$ be an affine $(n,m)$-function with $L\in \mathcal{L}_{n,m}$. Define the linear $(m,n)$-function $L^*$ by $L^*(v)\cdot x=v\cdot L(x)$ for all $x\in \GF{2}^n$. Then
\begin{align*}
d_H(f,A) &= 2^n-2^{-m}\sum_{\substack{x\in \GF{2}^n \\ v\in \GF{2}^m}} (-1)^{v\cdot (f(x)+A(x))} \\
&= 2^n-2^{-m}\sum_{v \in \GF{2}^m} (-1)^{v\cdot b} \sum_{x \in \GF{2}^n} (-1)^{v\cdot f(x)+v\cdot L(x)} \\
&= 2^n-2^{-m}\sum_{v \in \GF{2}^m} (-1)^{v\cdot b} \sum_{x \in \GF{2}^n} (-1)^{v\cdot f(x)+L^*(v)\cdot x} \\
&=2^n-2^{-m} \sum_{v \in \GF{2}^m} (-1)^{v\cdot b} \, W_f(L^*(v),v).
\end{align*}
The lemma follows from the definition of $\dA{f}$. 
\end{proof}

\section{The distance of $(2m,t)$-bent functions to affine functions}

We recall that $(n,m)$-functions achieving the covering radius bound
\[\nl(f)\leq 2^{n-1}-2^{n/2-1}\]
are called \textit{$(n, m)$-bent functions.} It is known that $(n, m)$-bent functions exist only if $n$ is even and $m \leq n/2$. The function $f$ is $(n,m)$-bent if and only if all its non-trivial component functions $v\cdot f$ are bent ($v\in \GF{2}^m\setminus \{0\}$). Equivalently, $W_f(a,b)=\pm 2^{n/2}$ for $b\neq 0$. 

We present three infinite classes of $(2m,t)$-bent functions. To the best knowledge of the author, the first two are commonly known, and the third one is new. 

\subsection{Maiorana-McFarland type $(n,m)$-bent functions}
The first class is a generalization of the Maiorana-McFarland construction. We take maps $\tau:\GF{2^m}\to \GF{2^t}$, $\pi:\GF{2^m}\to \GF{2^m}$, and $h:\GF{2^m}\to \GF{2^t}$ such that $\tau$ is surjective linear, $\pi$ is invertible, and $h$ is arbitrary. We write
\begin{align} \label{eq:gen-maior-mcf}
f(x,y)=\tau(x \pi(y))+ h(y)
\end{align}
with $x,y\in \GF{2^m}$. Then, the component functions are all Maiorana-McFarland bent functions of the form 
\[f'(x,y)=\mathrm{Tr}_{\GF{2^m}/\GF{2}}(x\pi'(y)) + h'(y).\]
Indeed, for fixed $y$, the map
\[\alpha_y:x \mapsto \mathrm{Tr}_{\GF{2^t}/\GF{2}}(\tau(x\pi(y)))\]
is a linear Boolean function. Hence, there is a unique element $\pi'(y)\in\GF{2^m}$ such that $\alpha_y(x) = \mathrm{Tr}_{\GF{2^m}/\GF{2}}(x\pi'(y))$. It is easy to see that $\pi'$ is a permutation of $\GF{2^m}$. This implies that $f(x,y)$ is $(2m,t)$-bent with $t\leq m$.

\subsection{Dillon type $(2m,t)$-bent functions}
To define the two other classes of $(n,m)$-bent functions, we need the notion of left pre-quasifields. 
\begin{definition}
Let $(Q, +)$ be a finite abelian group and $\star : Q \times Q \to Q$ a binary operation with the following properties:
\begin{enumerate}
\item $x \star 0 = 0 \star x = 0$ for all $x \in Q$.
\item $x \star (y + z) = x \star y + x \star z$ for all $x, y, z \in Q$.
\item For all $0 \neq a \in Q$ the mappings $x \mapsto x \star a$ and $x \mapsto a \star x$ are bijective.
\end{enumerate}
Then $Q = (Q, +, \star )$ is called a \textit{(left) pre-quasifield} or a weak (left) quasifield.
\end{definition}
One knows that $(Q, +)$ has to be an elementary abelian $p$-group, $p$ a prime, i.e. $Q$ is a vector space over $\GF{p}$, and $|Q|=q=p^m$ for some positive integer $m$. One usually identifies $(Q,+)$ with the additive group of the finite field $\GF{q}$. 

The $\PS$ class of bent functions was introduced by Dillon in his PhD thesis \cite{Dillon1974} in 1974. A subclass of the $\PS$ class has been presented in algebraic terms by Wu \cite{Wu2013} and by Wan and Xu \cite{Wan2023}. The right division operation of $(Q,+,\star)$ is defined as
\[\star\frac{x}{y} = \begin{cases}
a & \text{if $y\neq 0$ and $x=a\star y$}, \\
0 & \text{if $y=0$.}
\end{cases}
\]

\begin{theorem}[{Theorem 3.1 of \cite{Wu2013} and Corollary 17 of \cite{Wan2023}}]
Let $n=2m$ and $(\GF{2^m},+,\star)$ be a left pre-quasifield. Assume $g$ is a balanced Boolean function in $m$ variables with $g(0)=0$. Then the function
\begin{align} \label{eq:PS-bent}
f(x,y)=g\left(\star\frac{x}{y}\right)
\end{align}
is a $\PS^-$ bent function. 
\end{theorem}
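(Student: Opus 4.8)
Here is how I would approach the final statement (that $f(x,y)=g\!\left(\star\frac{x}{y}\right)$ is a $\PS^-$ bent function).

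The plan is to follow the classical route from a pre-quasifield, through its attached spread, to a partial-spread bent function, which reduces the statement to Dillon's theorem that functions in the class $\PS^-$ are bent. The first step is to produce the spread of $V:=\GF{2^m}\times\GF{2^m}$ carried by $(\GF{2^m},+,\star)$: for $a\in\GF{2^m}$ set $E_a=\{(a\star y,y):y\in\GF{2^m}\}$ and set $E_\infty=\GF{2^m}\times\{0\}$. Each $E_a$ is the image of the injective map $y\mapsto(a\star y,y)$, which is $\GF{2}$-linear because $y\mapsto a\star y$ is additive (axiom~(2)), so $E_a$ is an $m$-dimensional subspace; so is $E_\infty$. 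For $a\ne a'$ we have $E_a\cap E_{a'}=\{0\}$, since $(a\star y,y)=(a'\star y',y')$ gives $y=y'$ and $a\star y=a'\star y$, and for $y\ne0$ this forces $a=a'$ by axiom~(3) (bijectivity of $w\mapsto w\star y$); likewise $E_a\cap E_\infty=\{0\}$. A cardinality count then shows that the $2^m+1$ subspaces $\mathcal{S}:=\{E_a:a\in\GF{2^m}\}\cup\{E_\infty\}$ partition $V\setminus\{0\}$: every $(x,y)$ with $y\ne0$ lies in the unique $E_a$ with $a\star y=x$, and the points with $y=0$ fill $E_\infty$. So $\mathcal{S}$ is a spread of $V$.

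Next I would read off $f$ on the punctured spread components. On $E_a\setminus\{0\}=\{(a\star y,y):y\ne0\}$ we have $\star\frac{a\star y}{y}=a$, hence $f\equiv g(a)$; on $E_\infty\setminus\{0\}=\{(x,0):x\ne0\}$ we have $\star\frac{x}{0}=0$, hence $f\equiv g(0)=0$; and $f(0)=g(0)=0$. Thus $f$ is the indicator function of $U:=\bigcup_{a\in g^{-1}(1)}(E_a\setminus\{0\})$. Since $g$ is balanced and $g(0)=0$, the set $g^{-1}(1)$ has exactly $2^{m-1}$ elements, all nonzero; hence $U$ is the union of the punctured versions of exactly $2^{m-1}$ of the spread elements, none of them $E_\infty$. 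This is precisely the defining recipe of Dillon's class $\PS^-$, so $f\in\PS^-$, and in particular $f$ is bent by Dillon's theorem \cite{Dillon1974}.

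For a self-contained argument I would instead verify bentness by the short Walsh computation underlying Dillon's theorem. Splitting the Walsh sum over $\mathcal{S}$ --- each nonzero point of $V$ lying in exactly one spread member --- writing $\epsilon_E\in\GF{2}$ for the constant value of $f$ on $E\setminus\{0\}$, and using that $\sum_{(x,y)\in E}(-1)^{c\cdot(x,y)}$ equals $2^m$ if $c\in E^\perp$ and $0$ otherwise, one gets $W_f(c)=(-1)^{f(0)}-\sum_{E}(-1)^{\epsilon_E}+2^m\sum_{E\,:\,c\in E^\perp}(-1)^{\epsilon_E}$. The hypotheses on $g$ make $(-1)^{f(0)}=1$ and $\sum_{E}(-1)^{\epsilon_E}=\sum_{a\in\GF{2^m}}(-1)^{g(a)}+(-1)^{g(0)}=0+1=1$, so $W_f(c)=2^m\sum_{E\,:\,c\in E^\perp}(-1)^{\epsilon_E}$. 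The dual family $\{E^\perp:E\in\mathcal{S}\}$ is again a spread: distinct members $E,E'$ satisfy $E+E'=V$, hence $E^\perp\cap(E')^\perp=(E+E')^\perp=\{0\}$, and a count finishes it. So each $c\ne0$ lies in exactly one $E^\perp$, giving $|W_f(c)|=2^m$, and $W_f(0)=2^m$; thus $|W_f(c)|=2^m$ for all $c$, i.e., $f$ is bent.

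I do not foresee a genuine obstacle. Beyond routine bookkeeping with the pre-quasifield axioms, the only external input is that the dual family of a spread is again a spread --- equivalently, Dillon's bentness of $\PS^-$ --- which is classical, and that is where the bentness actually comes from. The structural point worth emphasizing is that the two hypotheses on $g$, balancedness and $g(0)=0$, are precisely the calibration that makes the number of active spread components equal to $2^{m-1}$, as $\PS^-$ requires, and that makes the constant term in the Walsh sum vanish.
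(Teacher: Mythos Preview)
Your proof is correct. The paper does not actually prove this theorem; it is stated as a cited result from \cite{Wu2013} and \cite{Wan2023}, with no argument given in the paper itself. Your approach---building the spread $\{E_a\}_{a\in\GF{2^m}}\cup\{E_\infty\}$ from the pre-quasifield, checking that $f$ is constant on each punctured component via the definition of $\star\frac{x}{y}$, and then invoking (or reproving) Dillon's $\PS^-$ theorem---is exactly the standard route underlying the cited references, and all the verifications (additivity of $y\mapsto a\star y$ from axiom~(2), trivial pairwise intersections from axiom~(3), the cardinality count, and the Walsh computation using the dual spread) are handled correctly. One minor remark: for membership in $\PS^-$ it is irrelevant that $E_\infty$ is not among the $2^{m-1}$ chosen components; what matters is only the count $2^{m-1}$ and that $f(0)=0$, both of which you establish.
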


The generalization to $(2m,t)$-bent functions ($t\leq m$) is straightforward. 

\begin{proposition}
Let $t\leq m$ be positive integers and $(\GF{2^m},+,\star)$ be a left pre-quasifield. Assume $\gamma$ is a balanced $(m,t)$-function. Then
\begin{align} \label{eq:PS-nm-bent}
f(x,y)=\gamma\left(\star\frac{x}{y}\right)
\end{align}
is a $(2m,t)$-bent function. 
\end{proposition}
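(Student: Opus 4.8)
The plan is to reduce the statement to the scalar case $t=1$, i.e.\ to the theorem of Wu \cite{Wu2013} and of Wan and Xu \cite{Wan2023} recalled above. By the characterisation of $(2m,t)$-bent functions, it suffices to show that every nonzero component function $v\cdot f$, $v\in\GF{2^t}\setminus\{0\}$, is a bent Boolean function in $2m$ variables. Setting $g_v=v\cdot\gamma\colon\GF{2^m}\to\GF{2}$, one has $(v\cdot f)(x,y)=g_v\!\left(\star\frac{x}{y}\right)$, so $v\cdot f$ is precisely of the form \eqref{eq:PS-bent} with $g=g_v$.

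First I would verify that $g_v$ is balanced. Since $\gamma$ is a balanced $(m,t)$-function, each fibre $\gamma^{-1}(b)$ has size $2^{m-t}$; hence $g_v^{-1}(0)=\bigcup_{v\cdot b=0}\gamma^{-1}(b)$ has size $2^{t-1}\cdot 2^{m-t}=2^{m-1}$, so $g_v$ — and likewise $1+g_v$ — is balanced. If $g_v(0)=0$, the scalar theorem applies verbatim and shows that $v\cdot f$ is a $\PS^-$ bent function. If $g_v(0)=1$, I would apply the scalar theorem to $1+g_v$, which is balanced and vanishes at $0$; the resulting bent function equals $(1+g_v)\!\left(\star\frac{x}{y}\right)=1+(v\cdot f)(x,y)$, and since $W_{1+h}(\alpha)=-W_h(\alpha)$ adding the constant $1$ does not affect bentness, so $v\cdot f$ is again bent. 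As $v$ was arbitrary, this proves that the function in \eqref{eq:PS-nm-bent} is $(2m,t)$-bent.

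If one prefers a self-contained argument, one can instead compute Walsh transforms directly. The key structural fact is that the graphs $L_a=\{(a\star y,y):y\in\GF{2^m}\}$ for $a\neq 0$, together with $\GF{2^m}\times\{0\}$ and $\{0\}\times\GF{2^m}$, form a spread of $\GF{2}^{2m}$ into $2^m+1$ subspaces of dimension $m$: axiom (2) of a left pre-quasifield makes $y\mapsto a\star y$ linear, so each $L_a$ is an $m$-dimensional subspace, axiom (3) forces all pairwise intersections to be trivial, and a count shows these subspaces cover $\GF{2}^{2m}$. On each spread member minus the origin $v\cdot f$ is constant (equal to $g_v(0)$ on $(\GF{2^m}\times\{0\})\cup(\{0\}\times\GF{2^m})$ and to $g_v(a)$ on $L_a$), so $W_{v\cdot f}(\alpha)$ can be summed member by member, using that $\sum_{z\in E\setminus\{0\}}(-1)^{\alpha\cdot z}$ equals $2^m-1$ if $\alpha\in E^\perp$ and $-1$ otherwise, and that every nonzero $\alpha$ is orthogonal to exactly one spread member since $E_i^\perp\cap E_j^\perp=(E_i+E_j)^\perp=\{0\}$ for $i\ne j$. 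Combined with the balancedness of $g_v$ this gives $W_{v\cdot f}(\alpha)=\pm 2^m$ for every $\alpha$ — including $\alpha=0$, which is checked directly from the fibre sizes of $\star\frac{x}{y}$ — hence $v\cdot f$ is bent.

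The only point that needs any care is the following: the ``zero fibre'' of $\star\frac{x}{y}$ is the union of \emph{two} spread members rather than one, which is why the value $g_v(0)$ plays a slightly special role and the argument branches according to whether $g_v(0)=0$ or $g_v(0)=1$. In both the reduction above and the direct computation this branching is handled uniformly, so no genuine obstacle appears; this is what makes the generalisation straightforward.
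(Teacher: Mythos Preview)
Your proposal is correct and follows essentially the same approach as the paper: reduce to the scalar case by observing that each nonzero component function has the form $g_v\!\left(\star\frac{x}{y}\right)$ with $g_v$ a balanced Boolean function, and then invoke the cited theorem of Wu and Wan--Xu. The paper's own proof is a single sentence to this effect. You are in fact more careful than the paper in one respect: the quoted scalar theorem requires $g(0)=0$, and you explicitly handle the case $g_v(0)=1$ by passing to $1+g_v$, whereas the paper leaves this implicit. Your additional self-contained spread computation is not needed for the proposition but is correct and is the standard argument underlying the scalar theorem itself.
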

\begin{proof}
Since the composition $\mathrm{Tr}_{\GF{2^t}/\GF{2}} \circ \gamma$ is balanced, all component functions of $f$ are bent functions of type \eqref{eq:PS-bent}.
\end{proof}

\subsection{Quasifield type $(2m,t)$-bent functions}

The following lemma is a well-known property of pre-quasifields.
\begin{lemma} \label{lm:quasiprop}
Let $m$ be a positive integer, $p$ a prime, and $(Q,+,\star)$ a left pre-quasifield with $|Q|=p^m$. If $y_1,y_2\in Q$ are different elements, then the map
\[\beta:Q\to Q, \qquad x\mapsto y_1\star x - y_2\star x\]
is a $\GF{p}$-linear isomorphism. 
\end{lemma}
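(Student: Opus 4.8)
The plan is to verify the two claims separately: that $\beta$ is $\GF{p}$-linear, and that it is a bijection. For linearity, I would start from axiom (2) of a left pre-quasifield: for any fixed $a\in Q$ the left-translation $x\mapsto a\star x$ satisfies $a\star(x+x')=a\star x+a\star x'$, so it is an additive endomorphism of $(Q,+)$. Since, as recalled just after the definition, $(Q,+)$ is an elementary abelian $p$-group, every additive endomorphism is automatically $\GF{p}$-linear. Applying this with $a=y_1$ and $a=y_2$ shows that $x\mapsto y_1\star x$ and $x\mapsto y_2\star x$ are $\GF{p}$-linear, hence so is their difference $\beta$.

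For bijectivity, since $Q$ is finite it suffices to show $\ker\beta=\{0\}$. So I would take $x\in Q$ with $\beta(x)=0$, that is $y_1\star x=y_2\star x$, and argue by contradiction that $x=0$. If $x\neq0$, axiom (3) guarantees that the right-translation $z\mapsto z\star x$ is a bijection of $Q$, in particular injective; from $y_1\star x=y_2\star x$ this forces $y_1=y_2$, contradicting $y_1\neq y_2$. Hence $x=0$, so $\beta$ is injective, and therefore a $\GF{p}$-linear isomorphism of $Q$.

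I do not expect a genuine obstacle here: the statement is a routine consequence of the axioms. The only points requiring a little care are bookkeeping ones — that the additivity of left-translations comes from the one-sided distributive law (2) used in its second argument (a left pre-quasifield need not be distributive on the other side), that the injectivity of right-translations is exactly the ``$x\mapsto x\star a$ bijective'' half of axiom (3), and that one is entitled to upgrade ``additive'' to ``$\GF{p}$-linear'' precisely because the underlying group is elementary abelian of exponent $p$.
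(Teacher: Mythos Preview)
Your proposal is correct and follows essentially the same approach as the paper: the paper also argues that a nonzero $x$ in $\ker\beta$ would give $y_1\star x=y_2\star x$, contradicting axiom (3), and simply declares the $\GF{p}$-linearity ``clear.'' Your write-up merely fills in the details the paper leaves implicit.
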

\begin{proof}
If $x\neq 0$ is in $\ker(\beta)$, then $y_1\star x = y_2\star x$ contradicts the defining property (3) of a left pre-quasifield. Hence, $\beta$ is bijective. The $\GF{p}$-linearity is clear.
\end{proof}

\begin{theorem}
Let $t\leq m$ be positive integers and $(\GF{2^m},+,\star)$ be a left pre-quasifield. Let the maps $\tau:\GF{2^m}\to \GF{2^t}$, $\sigma:\GF{2^m}\to \GF{2^m}$ and $h:\GF{2^m}\to \GF{2^t}$ be such that $\tau$ is surjective linear, $\sigma$ is invertible, and $h$ is arbitrary. Then
\begin{align} \label{eq:quasifield-bent}
f(x,y)=\tau\left(\sigma(y) \star{x}\right) +h(y)
\end{align}
is a $(2m,t)$-bent function. 
\end{theorem}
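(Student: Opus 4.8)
The plan is to show that every non-trivial component Boolean function of $f$ is bent in $2m$ variables; by the characterisation recalled at the start of this section ($f$ is $(2m,t)$-bent iff all $v\cdot f$ with $v\neq 0$ are bent), this proves the theorem. Concretely, I will recognise each component function as a Maiorana--McFarland bent function, mimicking the argument given above for the construction \eqref{eq:gen-maior-mcf}, with Lemma~\ref{lm:quasiprop} playing the role that bijectivity of multiplication by a non-zero field element plays there.

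Fix $v\in\GF{2^t}\setminus\{0\}$. Identifying $\GF{2}^t$ with $\GF{2^t}$ and $\GF{2}^m$ with $\GF{2^m}$, and using linearity of the trace, the component function is
\[
(v\cdot f)(x,y)=\trf{2^t}{2}\!\bigl(v\,\tau(\sigma(y)\star x)\bigr)+\trf{2^t}{2}\!\bigl(v\,h(y)\bigr).
\]
For each fixed $y$ the map $x\mapsto \trf{2^t}{2}(v\,\tau(\sigma(y)\star x))$ is $\GF{2}$-linear, by additivity of $\star$ in its second argument (property (2) of a pre-quasifield) together with the linearity of $\tau$ and of the trace. Since every $\GF{2}$-linear Boolean function on $\GF{2^m}$ equals $x\mapsto\trf{2^m}{2}(xc)$ for a unique $c\in\GF{2^m}$, there is a unique $\psi_v(y)\in\GF{2^m}$ with $\trf{2^t}{2}(v\,\tau(\sigma(y)\star x))=\trf{2^m}{2}(x\,\psi_v(y))$ for all $x$. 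Hence
\[
(v\cdot f)(x,y)=\trf{2^m}{2}\!\bigl(x\,\psi_v(y)\bigr)+g_v(y),\qquad g_v(y)=\trf{2^t}{2}\!\bigl(v\,h(y)\bigr),
\]
which is a Maiorana--McFarland function, bent precisely when $\psi_v$ is a permutation of $\GF{2^m}$.

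It remains to prove that $\psi_v$ is a bijection, which is the only step that is not purely formal. It suffices to show injectivity. If $\psi_v(y_1)=\psi_v(y_2)$ with $y_1\neq y_2$, then $\trf{2^t}{2}\bigl(v\,\tau(\sigma(y_1)\star x-\sigma(y_2)\star x)\bigr)=0$ for all $x\in\GF{2^m}$. Because $\sigma$ is invertible we have $\sigma(y_1)\neq\sigma(y_2)$, so Lemma~\ref{lm:quasiprop} (applied with $\sigma(y_1),\sigma(y_2)$ in place of $y_1,y_2$) shows that $\beta\colon x\mapsto \sigma(y_1)\star x-\sigma(y_2)\star x$ is a $\GF{2}$-linear bijection; thus $\beta(x)$ ranges over all of $\GF{2^m}$, and since $\tau$ is surjective we obtain $\trf{2^t}{2}(vw)=0$ for every $w\in\GF{2^t}$. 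Non-degeneracy of the trace form of $\GF{2^t}/\GF{2}$ then forces $v=0$, a contradiction. Therefore $\psi_v$ is injective, hence a permutation of the finite set $\GF{2^m}$, so $v\cdot f$ is bent for every $v\neq0$, and $f$ is $(2m,t)$-bent (note $n=2m$ is even and $t\leq m$ holds by hypothesis). I expect the proof to be routine; the only care needed is in tracking the identifications between $\GF{2^m},\GF{2^t}$ and their $\GF{2}$-coordinate spaces, while Lemma~\ref{lm:quasiprop} carries the essential content.
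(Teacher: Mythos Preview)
Your proof is correct and follows essentially the same strategy as the paper: both reduce to showing each component function is Maiorana--McFarland, using Lemma~\ref{lm:quasiprop} together with surjectivity of $\tau$ to verify the required permutation property. Your injectivity argument is the contrapositive of the paper's direct construction and is in fact slightly cleaner, since by isolating $\psi_v$ from the $g_v(y)$ term you avoid having to choose an auxiliary element with prescribed trace; you also make explicit the use of invertibility of $\sigma$ (to get $\sigma(y_1)\neq\sigma(y_2)$), which the paper's write-up elides.
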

\begin{proof}
Any component function of $f$ has the shape $\mathrm{Tr}_{\GF{2^t}/\GF{2}}(uf(x,y))$ for some $u\in \GF{2^t}^*$. For fixed $u$ and $y$, the map
\[\alpha_y:x \mapsto \mathrm{Tr}_{\GF{2^t}/\GF{2}}(u\tau(\sigma(y) \star x) +uh(y))\]
is an affine Boolean function in $m$ variables. Hence, there is a unique element $\sigma'(y)\in\GF{2^m}$ such that  
\[\alpha_y(x) = \mathrm{Tr}_{\GF{2^m}/\GF{2}}(x\sigma'(y))+h'(y), \]
where $h'(y)=\mathrm{Tr}_{\GF{2^t}/\GF{2}}(uh(y)) \in \GF{2}$. 

We show that $\sigma'$ is bijective on $\GF{2^m}$. Let us fix $u,y_1,y_2\in \GF{2^m}$, $y_1\neq y_2$. There is an element $v\in \GF{2^t}$ such that \[\mathrm{Tr}_{\GF{2^t}/\GF{2}}(uv)\neq h'(y_1)+h'(y_2).\]
As $\tau$ is surjective, there is a $z\in \GF{2^m}$ with $\tau(z)=v$. By Lemma \ref{lm:quasiprop}, there is $x\in \GF{2^m}$ with $y_1\star x + y_2\star x=z$. Then
\begin{align*}
\alpha_{y_1}(x)+\alpha_{y_2}(x) &= \mathrm{Tr}_{\GF{2^t}/\GF{2}}(u\tau(y_1\star x + y_2\star x) + uh(y_1)+uh(y_2)) \\
&= \mathrm{Tr}_{\GF{2^t}/\GF{2}}(u\tau(z)) + h'(y_1)+h'(y_2) \\
&= \mathrm{Tr}_{\GF{2^t}/\GF{2}}(uv) + h'(y_1)+h'(y_2)\\
&\neq 0.
\end{align*}
This implies $\alpha_{y_1}(x)\neq \alpha_{y_2}(x)$ and $\sigma'(y_1)\neq \sigma'(y_2)$. It follows that all component functions of $f$ are of Maiorana-McFarland type bent function. This finishes the proof of the theorem.  
\end{proof}

In this paper, we do not study the question of EA-equivalence of the $(2m,t)$-bent functions defined above. In general, this is a very difficult question. We only remark that Weng, Feng and Qui \cite{Weng2007} proved that most of the $\PS$ type bent functions, obtained from a Desarguesian spread are not EA-equivalent to any Maiorana–McFarland bent function. This leads us to conclude that, typically, for $t>1$, the $(2m,t)$-bent functions described by \eqref{eq:quasifield-bent} are generally not EA-equivalent to the other two classes, as specified in \eqref{eq:gen-maior-mcf} and \eqref{eq:PS-nm-bent}.

\subsection{Proof of Theorem \ref{thm:main-bent}}

Let us recall the Carlet-Ding-Yuan bound \eqref{eq:CDY-2005} for the distance between affine and $(n,m)$-bent functions:
\begin{align*} 
\left(1-\frac{1}{2^m}\right)\big(2^n-2^{n/2}\big)\leq \dA{f} \leq \left(1-\frac{1}{2^m}\right)\big(2^n+2^{n/2}\big).
\end{align*}
For an $(n,m)$-bent function $f$, the Walsh coefficients are 
\[W_f(a,b)=\begin{cases}
\pm 2^{n/2} & \text{if $b\neq 0$,} \\
0 & \text{if $a\neq 0$, $b=0$,} \\
2^n & \text{if $a=0$, $b=0$.}
\end{cases}\]
Hence, the Carlet-Ding-Yuan bound follows from Lemma \ref{lm:dH-formula} easily. The Liu-Mesnager-Chen Conjecture implies that the true value of $\dA{f}$ is $\left(1-\frac{1}{2^m}\right)\big(2^n-2^{n/2}\big)$. Theorem \ref{thm:main-bent} claims that this holds for two classes of $(n,m)$-bent functions. 

\begin{proof}[Proof of Theorem \ref{thm:main-bent}]
Let $E_i$ be the set of pairs $(x,y)\in \GF{2^m}^2$ such that $f_i(x,y)=f_i(0,0)$, $i=1,2$. We show that $|E_i|=2^{2m-t}+2^m-2^{m-t}$, which implies that $f_i$ has Hamming distance $(1-2^{-t})(2^{2m}-2^m)$ from the constant function $f_i(0,0)$. Therefore, $\dA{f_i}\leq (1-2^{-t})(2^{2m}-2^m)$, and the theorem follows from the Carlet-Ding-Yuan bound. 

Let $T$ be the set of elements $z\in\GF{q^m}$ with $\gamma(z)=f_1(0,0)$. Since $\gamma$ is balanced, $|T|=2^{m-t}$, and $f_1(x,y)=f_1(0,0)$ if and only if $\star\frac{x}{y} \in T$. Moreover, since $\star\frac{0}{0}=0$, we have $0\in T$. The number of solutions of $\star\frac{x}{y}=0$ is $2\cdot 2^m-1$, and the number of solutions of $\star\frac{x}{y}=t\in T\setminus\{0\}$ is $2^m-1$. This implies
\[|E_1|=|f_1^{-1}(f_1(0,0))| = 2\cdot 2^m-1+(2^{m-t}-1)(2^m-1).\]
The same argument applies to $f_2$. In this case, $f_2(0,0)=h(0)$, and the number of solutions of $\sigma(y)\star x=t$ is $2\cdot 2^m-1$ or $2^m-1$, depending on $t=0$ or $t\neq 0$. 
\end{proof}

\section{The distance of monomial functions to affine functions}

In this section, we prove Theorem \ref{thm:main-dH-bounds} in several steps. All functions we are dealing with are $(n,n)$-functions, and $\mathcal{G}_f$ denotes the graph of the function $f$. 

\begin{definition}
Let $A$ be a finite abelian group. We say that $S\subseteq A$ is a \textit{Sidon set} in $A$, if for any $x,y,z,w\in S$ of which at least three are different,
\[x+y\neq z+w.\]
Equivalently, $x-z\neq w-y$. 
\end{definition}

The behavior of Sidon sets depends very much on the structure of $A$, in particular, on the number of involutions in $A$. In this paper, we consider Sidon sets in the elementary Abelian $2$-group $\GF{2}^n$. Moreover, the Sidon property is affine invariant, hence, we can speak of Sidon sets in affine subspaces. (Affine subspaces are translates of $\GF{2}$-linear subspaces.)

\begin{lemma}[Obvious upper bound] \label{lm:obvious}
For a Sidon $S\subseteq \GF{2}^n$, we have 
\[|S|\leq \sqrt{2}\cdot 2^{n/2} +\frac{1}{2}. \]
\end{lemma}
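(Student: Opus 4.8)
The plan is to prove the bound $|S|\le \sqrt{2}\cdot 2^{n/2}+\tfrac12$ by a standard double-counting argument on ordered pairs of distinct elements of $S$. First I would let $s=|S|$ and count the set $P=\{(x,y)\in S\times S : x\neq y\}$, which has $s(s-1)$ elements. To each such pair associate the sum $x+y\in\GF{2}^n\setminus\{0\}$ (the sum is nonzero precisely because $x\neq y$ in characteristic two). The Sidon property says that if $x+y=z+w$ with at least three of $x,y,z,w$ distinct, then we get a contradiction; so for a fixed nonzero value $v\in\GF{2}^n$, the set of ordered pairs $(x,y)\in P$ with $x+y=v$ has very restricted structure. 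The key point I would make precise: if $(x,y)$ and $(z,w)$ are two such pairs with $x+y=z+w=v$, then $\{x,y\}$ and $\{z,w\}$ cannot have exactly three distinct elements among them, so either $\{x,y\}=\{z,w\}$ (as unordered pairs) or they are disjoint. Hence the unordered pairs summing to a fixed $v$ are pairwise disjoint as subsets of $S$, which forces at most $\lfloor s/2\rfloor$ of them — but in fact I only need the crude bound that a given $v$ is hit by boundedly many ordered pairs, and the clean way to get the stated constant is the following refinement.

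More precisely, I would argue that each nonzero $v\in\GF{2}^n$ is represented as $x+y$ for at most two ordered pairs $(x,y)\in P$ — equivalently, by at most one unordered pair $\{x,y\}\subseteq S$. Indeed, suppose $\{x,y\}$ and $\{z,w\}$ are two distinct unordered pairs in $S$ with $x+y=z+w$. If they share exactly one element, that is exactly the ``at least three distinct'' case forbidden by the Sidon condition; and they cannot share both elements since the pairs are distinct; so they are disjoint, giving four distinct elements $x,y,z,w\in S$ with $x+y=z+w$, which is again forbidden (all four distinct is certainly ``at least three distinct''). Wait — this shows distinct unordered pairs never give the same sum at all, so in fact the map $\{x,y\}\mapsto x+y$ from $2$-subsets of $S$ to $\GF{2}^n\setminus\{0\}$ is injective. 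Therefore $\binom{s}{2}\le 2^n-1$, i.e. $s(s-1)\le 2(2^n-1)=2^{n+1}-2$.

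It then remains to solve the quadratic inequality $s^2-s-(2^{n+1}-2)\le 0$ for $s$, giving $s\le \tfrac12\bigl(1+\sqrt{1+4(2^{n+1}-2)}\bigr)=\tfrac12\bigl(1+\sqrt{2^{n+3}-7}\bigr)$. Since $2^{n+3}-7<2^{n+3}=(\sqrt{2}\cdot 2^{(n+2)/2})^2$... let me instead just bound $\sqrt{2^{n+3}-7}\le \sqrt{2^{n+3}}=2\sqrt{2}\cdot 2^{n/2}$, so $s\le \tfrac12+\sqrt{2}\cdot 2^{n/2}$, which is exactly the claimed bound. I expect the only genuine subtlety — and the one place to be careful rather than hard — is the case analysis showing the sum map on $2$-subsets is injective: one must handle the ``share one element'' and ``share no element'' cases separately and check in each that the Sidon hypothesis (``at least three of the four are distinct'') genuinely applies, noting in particular that when $\{x,y\}\cap\{z,w\}$ has one element the remaining three are automatically distinct. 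Everything after that is the routine quadratic estimate, and no deeper input is needed.
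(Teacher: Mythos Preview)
Your argument is correct: injectivity of $\{x,y\}\mapsto x+y$ on $2$-subsets of $S$ gives $\binom{s}{2}\le 2^n-1$, and the quadratic inequality yields exactly $s\le \sqrt{2}\cdot 2^{n/2}+\tfrac12$. The paper does not give its own proof here but simply refers to \cite{Nagy2022}; your counting is the standard proof one would find behind such a citation. (A tiny remark: in the ``share one element'' case you invoke the Sidon hypothesis, but in fact cancellation alone already forces $y=w$; the Sidon condition is only genuinely needed in the disjoint case.)
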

\begin{proof}
See Lemma 4 and 5 of \cite{Nagy2022}.
\end{proof}

The following lemma is folklore:

\begin{lemma} \label{lm:ANPgraph}
The $(n,n)$-function is APN if and only if its graph $\mathcal{G}_f$ is a Sidon set in $\GF{2^n}^{2}$. \qed
\end{lemma}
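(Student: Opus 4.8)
The plan is to translate the Sidon condition on $\mathcal{G}_f$ directly into a statement about the number of solutions of the derivative equation $f(x+a)+f(x)=b$, and then read off the equivalence with APN-ness. Writing a generic quadruple of graph points as $P_i=(x_i,f(x_i))$, $i=1,\dots,4$, the relation $P_1+P_2=P_3+P_4$ is the conjunction $x_1+x_2=x_3+x_4$ and $f(x_1)+f(x_2)=f(x_3)+f(x_4)$; so the Sidon property of $\mathcal{G}_f$ says exactly that this conjunction forces at least two of the $P_i$ to coincide.

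For ``$f$ not APN $\Rightarrow$ $\mathcal{G}_f$ not Sidon'': suppose $\delta(a,b)\geq 4$ for some $a\neq 0$. The solution set of $f(x+a)+f(x)=b$ is a union of cosets of $\{0,a\}$, so it contains two such cosets; pick representatives $x_1$ and $x_3$ with $x_3\notin\{x_1,x_1+a\}$ and set $x_2=x_1+a$, $x_4=x_3+a$. These four inputs are pairwise distinct, and a direct computation gives $(x_1,f(x_1))+(x_2,f(x_2))=(a,b)=(x_3,f(x_3))+(x_4,f(x_4))$, so four distinct points of $\mathcal{G}_f$ violate the Sidon property.

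For the converse, start from $P_1+P_2=P_3+P_4$ with at least three of the $P_i$ distinct. First I would dispose of the degenerate case: if $x_1=x_2$ then $P_1=P_2$ and $P_1+P_2=0$, forcing $P_3+P_4=0$, i.e.\ $P_3=P_4$, which leaves at most two distinct points — contradiction; hence $x_1\neq x_2$, and symmetrically $x_3\neq x_4$. Now $a:=x_1+x_2=x_3+x_4\neq 0$ and $b:=f(x_1)+f(x_2)=f(x_3)+f(x_4)$, and each of $x_1,x_2,x_3,x_4$ solves $f(x+a)+f(x)=b$. If the cosets $\{x_1,x_2\}$ and $\{x_3,x_4\}$ were not disjoint they would be equal (both being a single coset of $\{0,a\}$), giving $\{P_1,P_2\}=\{P_3,P_4\}$ and at most two distinct points; so they are disjoint, $\delta(a,b)\geq 4$, and $f$ is not APN.

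I do not expect a genuine obstacle: the content is a bookkeeping argument matching solution cosets with pairs of graph points. The only point requiring care is the interplay between the ``at least three distinct'' clause in the definition of a Sidon set and the characteristic-two identity $P+P=0$, which is precisely what the case split in the converse direction is for.
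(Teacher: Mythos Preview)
Your argument is correct. The paper does not actually supply a proof of this lemma: it is stated as folklore and closed with a bare \qed. Your write-up is exactly the standard justification, matching pairs of graph points with cosets $\{x,x+a\}$ in the solution set of $f(x+a)+f(x)=b$ and using the characteristic-two identity $P+P=0$ to eliminate the degenerate cases arising from the ``at least three distinct'' clause. There is nothing to compare against in the paper beyond the implicit claim that this is routine, which your proof confirms.
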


\begin{lemma} \label{lm:APN-lower}
If $f$ is APN and $A$ is affine, then $|\mathcal{G}_f\cap \mathcal{G}_A|\leq \sqrt{2}\cdot 2^{n/2} +\frac{1}{2}$. 
\end{lemma}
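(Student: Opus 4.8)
The plan is to exploit the fact that the intersection $\mathcal{G}_f \cap \mathcal{G}_A$ is a Sidon set confined to a low-dimensional affine subspace, and then to invoke the obvious upper bound (Lemma~\ref{lm:obvious}) \emph{inside that smaller space}. The crucial point is the dimension count: although $\mathcal{G}_f$ is a Sidon set in the $2n$-dimensional space $\GF{2^n}^{2}\cong\GF{2}^{2n}$, its intersection with $\mathcal{G}_A$ is trapped in the $n$-dimensional affine subspace $\mathcal{G}_A$. Applying the obvious bound with dimension $n$ (rather than $2n$) is exactly what produces the target $\sqrt{2}\cdot 2^{n/2}+\frac{1}{2}$.

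Concretely, I would proceed as follows. Since $f$ is APN, Lemma~\ref{lm:ANPgraph} says that $\mathcal{G}_f$ is a Sidon set in $\GF{2^n}^{2}$. The Sidon property is hereditary: the defining condition $x+y\neq z+w$ (for quadruples of which at least three are distinct) is a condition on tuples of set elements, so every subset of a Sidon set is again a Sidon set. Hence $S:=\mathcal{G}_f\cap\mathcal{G}_A$ is itself a Sidon set.

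Next, I would check that $\mathcal{G}_A$ is an $n$-dimensional affine subspace of $\GF{2}^{2n}$. Writing $A(x)=L(x)+b$ with $L$ linear and $b$ constant, the graph $\mathcal{G}_A=\{(x,L(x)+b):x\in\GF{2^n}\}$ is the image of the injective affine map $x\mapsto(x,L(x)+b)$, hence an affine subspace of dimension exactly $n$. Since the Sidon property is affine invariant — as noted in the paragraph preceding Lemma~\ref{lm:obvious} — I may fix an affine isomorphism identifying $\mathcal{G}_A$ with $\GF{2}^n$, under which $S$ becomes a Sidon set in $\GF{2}^n$. Applying Lemma~\ref{lm:obvious} over $\GF{2}^n$ then gives $|S|=|\mathcal{G}_f\cap\mathcal{G}_A|\leq \sqrt{2}\cdot 2^{n/2}+\frac{1}{2}$, which is the claimed bound.

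The step requiring the most care is the dimension accounting in the previous paragraph: one must resist the temptation to apply Lemma~\ref{lm:obvious} to the ambient $2n$-dimensional space $\GF{2^n}^{2}$ (which would yield only the far weaker $\sqrt{2}\cdot 2^{n}+\frac{1}{2}$), and instead recognize that the intersection lies entirely within the $n$-dimensional slice $\mathcal{G}_A$, so that the bound must be computed with $n$ in place of $2n$. Once this is observed, the argument is a direct chaining of Lemmas~\ref{lm:ANPgraph} and~\ref{lm:obvious}, together with the hereditary and affine-invariance properties of Sidon sets.
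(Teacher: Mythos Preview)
Your proposal is correct and follows essentially the same route as the paper: use Lemma~\ref{lm:ANPgraph} to get that $\mathcal{G}_f$ is Sidon, observe that $\mathcal{G}_A$ is an $n$-dimensional affine subspace so that $\mathcal{G}_f\cap\mathcal{G}_A$ is Sidon inside a copy of $\GF{2}^n$ by affine invariance, and conclude via Lemma~\ref{lm:obvious}. You have simply spelled out the hereditary and dimension-counting steps more explicitly than the paper does.
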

\begin{proof}
The graph of an affine function is an affine subspace of (affine) dimension $n$. As $f$ is APN, and the Sidon property is affine invariant, $\mathcal{G}_f$ is Sidon in $\mathcal{G}_A$. The claim follows from Lemma \ref{lm:obvious}. 
\end{proof}

\begin{lemma} \label{lm:inverse-lower}
Let $n$ be even, $f(x)=x^{2^n-2}$ the Field Inverse Function, and $A$ an affine function. Then $|\mathcal{G}_f\cap \mathcal{G}_A|\leq \sqrt{2}\cdot 2^{n/2} +\frac{3}{2}$. 
\end{lemma}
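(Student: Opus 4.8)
The plan is to relate the Field Inverse Function to an APN function by a small modification, so that Lemma \ref{lm:APN-lower} can be applied after accounting for the change. In even dimension $n$, the function $f(x)=x^{2^n-2}=x^{-1}$ (with $0\mapsto 0$) has differential uniformity $4$, and the failure of the APN property is very localized: the equation $f(x+a)+f(x)=b$ has four solutions only for certain pairs $(a,b)$, and all such "bad" configurations pass through the origin in a controlled way. More precisely, over $\GF{2^m}$ with $m=n/2$ one has $x^{-1}+(x+1)^{-1}=1$ for $x\notin\{0,1\}$, so the fibre structure that breaks the Sidon property is tied to the subfield $\GF{2^{n/2}}$ or, equivalently, to the pair of points $(0,0)$ and the behaviour of $f$ on a single coset. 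The key observation I would isolate is: the graph $\mathcal{G}_f$ of the inverse function differs from the graph of a genuinely APN function (e.g. a suitably chosen APN permutation, or $f$ with one point moved) in exactly one point. Hence $\mathcal{G}_f = (\mathcal{G}_g \setminus \{P\}) \cup \{P'\}$ for an APN $g$ and points $P,P'$.

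With that structural fact in hand, the argument is short. Let $A$ be affine, so $\mathcal{G}_A$ is an $n$-dimensional affine subspace of $\GF{2^n}^2$. Write $\mathcal{G}_f = (\mathcal{G}_g\setminus\{P\})\cup\{P'\}$. Then
\[
|\mathcal{G}_f\cap\mathcal{G}_A| \leq |\mathcal{G}_g\cap\mathcal{G}_A| + |\{P'\}\cap\mathcal{G}_A| \leq |\mathcal{G}_g\cap\mathcal{G}_A| + 1,
\]
and since $g$ is APN, Lemma \ref{lm:APN-lower} gives $|\mathcal{G}_g\cap\mathcal{G}_A|\leq \sqrt{2}\cdot 2^{n/2}+\tfrac12$. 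Combining, $|\mathcal{G}_f\cap\mathcal{G}_A|\leq \sqrt{2}\cdot 2^{n/2}+\tfrac32$, which is exactly the claim. An alternative route, if the "one-point modification to an APN function" fact is awkward to state cleanly, is to work directly with the Sidon-type counting: $\mathcal{G}_f$ fails to be Sidon in $\GF{2^n}^2$, but the number of "illegal" quadruples is small, and one can show that removing a single well-chosen point from $\mathcal{G}_f$ restores the Sidon property; then apply Lemma \ref{lm:obvious} to $\mathcal{G}_f\setminus\{\text{that point}\}$ inside $\mathcal{G}_A$ and add back $1$.

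The main obstacle I anticipate is pinning down, rigorously and uniformly in $n$, that a single-point deletion makes $\mathcal{G}_f$ Sidon (equivalently, that the $4$-valued differentials of $x^{-1}$ all "concentrate" through one common point of the graph). This should follow from the classical description of the derivatives of the inverse function: for $a\neq 0$, the equation $(x+a)^{-1}+x^{-1}=b$ is $x^2+ax+a/b=0$ (for $b\neq 0$, $x\notin\{0,a\}$), which has $0$ or $2$ solutions according to a trace condition, plus the two "boundary" solutions $x=0,a$ giving $b=a^{-1}$; the only way to get $4$ solutions is when $\trf{n}{2}(1/(ab))=0$ and simultaneously $b=a^{-1}$, forcing $\trf{n}{2}(1)=0$, i.e. tying everything to the fixed point $0$. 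Turning this into the clean statement "$\mathcal{G}_f\setminus\{(0,0)\}$ is Sidon" (or an equally usable variant) is the one place real work is needed; once it is established, the distance bound is immediate from Lemma \ref{lm:obvious} exactly as above, with the extra $+1$ accounting for the removed point $(0,0)$ and the extra $\tfrac12\to\tfrac32$ matching the statement.
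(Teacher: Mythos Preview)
Your alternative route is exactly the paper's argument: write $\mathcal{G}_f = H \cup \{(0,0)\}$ with $H=\{(x,x^{-1}) : x\in\GF{2^n}^*\}$ the hyperbola, observe that $H$ is a Sidon set (the paper cites this from \cite{Nagy2022}; your differential analysis establishes it directly, since every $4$-solution derivative of $x^{-1}$ has $0$ in its solution set), and then bound $|\mathcal{G}_f\cap\mathcal{G}_A|\le |H\cap\mathcal{G}_A|+1$ by Lemma~\ref{lm:obvious}. Your primary route through an auxiliary APN function $g$ with $\mathcal{G}_f=(\mathcal{G}_g\setminus\{P\})\cup\{P'\}$ is an unnecessary detour and is not obviously available---there is no a priori reason that some choice $g(0)=c$ makes $H\cup\{(0,c)\}$ Sidon---so drop it in favor of the hyperbola argument you already have.
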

\begin{proof}
The graph of $f$ is $H\cup \{(0,0)\}$, where $H=\{(x,1/x) \mid x\in \GF{2^n}^*\}$ is the hyperbola $XY=1$. The hyperbola is a Sidon set by Proposition 12 in \cite{Nagy2022}. If $A(0)\neq 0$, then $\mathcal{G}_f\cap \mathcal{G}_A = H \cap \mathcal{G}_A$ is a Sidon set in $\mathcal{G}_A$. If $A(0)=0$, then $\mathcal{G}_f\cap \mathcal{G}_A = (H \cap \mathcal{G}_A) \cup \{(0,0)\}$, one larger than the Sidon set $H \cap \mathcal{G}_A$. In both cases, the claim follows from Lemma \ref{lm:obvious}. 
\end{proof}

\begin{proof}[Proof of Theorem \ref{thm:main-dH-bounds}]
Since the Hamming distance of two functions $f,g$ is
\[d_H(f,g)=2^n-|\mathcal{G}_f\cap \mathcal{G}_g|,\]
Lemmas \ref{lm:APN-lower} and \ref{lm:inverse-lower} imply the lower bounds for $\dA{h_1}$ and $\dA{h_2}$. To prove the upper bounds, it suffices to present affine functions $A_1,A_2$ such that
\[|\mathcal{G}_{h_i}\cap \mathcal{G}_{A_i}|=2^{n/2}+2, \qquad (i=1,2).\]
With $A_1(x)=x^{2^{n/2}}$ we have $h_1(x)=A_1(x)$ if and only if either $x=0$, or $x\neq 0$ and
\begin{align*}
 1/x=x^{2^{n/2}} \Leftrightarrow  1=x^{2^{n/2}+1}.
\end{align*}
As $2^{n/2}+1$ divides $2^n-1$, the equation $1=x^{2^{n/2}+1}$ has $2^{n/2}+1$ solutions in $\GF{2^n}$. All together, $h_1(x)=A_1(x)$ has $2^{n/2}+2$ solutions in $\GF{2^n}$. 

Similar argument applies for $h_2(x)$ with $A_2(x)=x^{2^{n-1}}=x^{\frac{1}{2}}$. Then 
\begin{align*}
x^{\frac{1}{2}} = x^{2^{n/2-1}+1} & \Leftrightarrow x=x^{2^{n/2}+2} \\
& \Leftrightarrow x=0 \text{ or } 1=x^{2^{n/2}+1}. 
\end{align*}
Again, $1=x^{2^{n/2}+1}$ has $2^{n/2}+1$ solutions, and $h_2(x)=A_2(x)$ has $2^{n/2}+2$ solutions in $\GF{2^n}$. 
\end{proof}

\section{Distance results in small dimensions}

In this section, we present computational results on the distance between APN functions and affine functions in dimension $n\leq 9$. The results for $n\leq 5$ have been obtained by Ryabov \cite{Ryabov2023}. Our approach is based on the detailed study of the Sidon sets $\mathcal{G}_f$ in $\GF{2}^{2n}$, and $\mathcal{G}_f\cap \mathcal{G}_A$ in $\GF{2}^n$, where $f$ is an APN function in dimension $n$, and $A$ is affine. We use a computational method to compute automorphisms and isomorphisms of Sidon sets. For Sidon sets, this method is novel, but it has many similarities with Kaleyski's families of invariants \cite{Kaleyski2021} for deciding EA-equivalence of $(n,m)$-functions.

\subsection{APN functions in small dimension}
We give a brief overview of the known APN functions in dimension $n\leq 9$. APN power functions play a distinguished role in the theory of Boolean functions. The six known infinite families of APN power functions are given in Table \ref{tab:apn-power}.

\begin{table} 
\caption{(Table 11.2 in \cite{carlet2020}) Known infinite families of APN power functions over $\GF{2^n}$.\label{tab:apn-power}}
\begin{tabular}{lll}
\hline\hline
Functions \hspace{2cm} & Exponents $d$ \hspace{4cm} & Conditions \\
\hline
Gold & $2^{i+1}$ & $\gcd(i,n)=1$ \\
Kasami & $2^{2i}-2^i+1$ & $\gcd(i,n)=1$ \\
Welch & $2^t+3$ & $n=2t+1$ \\
Niho & $2^t+t^{t/2}-1$, $t$ even & $n=2t+1$ \\
 & $2^t+t^{(3t+1)/2}-1$, $t$ odd &  \\
Inverse & $2^{2t}-1$ & $n=2t+1$ \\
Dobbertin & $2^{4t}+2^{3t}+2^{2t}+2^t-1$ & $n=5t$ \\
\hline\hline
\end{tabular}
\end{table}

\begin{theorem}[{Theorem 5 in \cite{Brinkmann2008}}]
There are $2$ APN functions in dimension $4$ up to EA-equivalence. One of those is EA-equivalent to the power function $x^3$. There are $7$ APN functions in dimension $5$ up to EA-equivalence. Five of those are EA-equivalent to the power functions $x^3$, $x^5$, $x^7$, $x^{11}$, $x^{15}$.
\end{theorem}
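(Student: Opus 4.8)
The plan is to prove this classification by an exhaustive, heavily pruned computer search combined with invariant-based isomorph rejection; this is how the result was obtained in \cite{Brinkmann2008}, and I would reconstruct it along the following lines.

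\textbf{Normalisation and the search tree.} Every APN $(n,n)$-function is EA-equivalent to one with $f(0)=0$, and by composing with suitable invertible linear maps $L,L' \colon \GF{2}^n \to \GF{2}^n$ one may further normalise the images $f(e_1),\dots,f(e_n)$ of a fixed basis to lie in a prescribed set of representatives; this removes most of the $|\mathrm{GL}(n,2)|^2\cdot 2^{2n}$ redundancy. Then, fixing an ordering of $\GF{2}^n$, I would build $f$ one value at a time, testing after each assignment the \emph{partial} APN condition: for every $a$ in the span of the inputs treated so far and every treated pair $x,x+a$, the derivative value $f(x+a)+f(x)$ must not already be attained four times. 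Since the APN constraint is extremely restrictive, most branches die almost at once. Equivalently, and closer to the viewpoint of this paper, one is enumerating Sidon subsets of $\GF{2}^{2n}$ that are function graphs (Lemma \ref{lm:ANPgraph}) up to the relevant subgroup of $\AGL(2n,2)$.

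\textbf{Isomorph rejection and class separation.} To list each class only once I would use canonical augmentation: at each node keep only the one-point extensions that are lexicographically minimal under the stabiliser, in the normalisation group, of the current partial function. For the completed functions I would then record a tuple of EA-invariants — the extended Walsh spectrum, the differential spectrum, the algebraic degree, the $\Gamma$-rank and $\Delta$-rank of the graph, and the orbit structure of the automorphism group of $\mathcal{G}_f$ (computable by the method described in the last section of this paper) — and, among functions with identical invariants, decide EA-equivalence directly, by the standard reduction of the EA-test to linear algebra over $\GF{2}$ or by brute force over the now small set of candidate triples $(L,L',L'')$. The output should be exactly two classes for $n=4$ and seven for $n=5$. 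Finally I would identify the power-function representatives: $x^3$ is APN over $\GF{2^4}$ and $\GF{2^5}$ (Gold exponent $2^1+1$, $\gcd(1,n)=1$), and $x^5$ (Gold), $x^7$ (Welch), $x^{11}$, and $x^{15}$ (linearly equivalent to the field inverse, $n=5$ odd) are APN over $\GF{2^5}$; computing their invariants and running the direct EA-test matches them to five of the seven classes, and a pairwise EA-test confirms these five are mutually inequivalent. The coarse split is by algebraic degree ($x^3,x^5$ are quadratic, $x^7,x^{11},x^{15}$ are not); separating the two Gold classes $x^3,x^5$, and separating $x^7$ from $x^{11}$, needs finer data such as the $\Gamma$-rank or the automorphism group of $\mathcal{G}_f$, since all four of these are almost bent and share the same Walsh spectrum.

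\textbf{Main obstacle.} The crux is making the dimension-$5$ search provably exhaustive yet feasible: the naive space has size $2^{5\cdot 2^5}$, so the whole argument hinges on showing that normalisation plus partial-APN pruning leaves a tractable tree and that the isomorph rejection is implemented so that no EA-class is silently merged or split. A related subtlety is that EA-equivalence is strictly finer than CCZ-equivalence (``graph-equivalence under $\AGL(2n,2)$''): indeed $x^{11}$ is CCZ- but not EA-equivalent to $x^3$, so already in dimension $5$ a single CCZ-class splits into several EA-classes, and one cannot shortcut the count by classifying only up to CCZ. One may, however, still use CCZ-equivalence as an organising tool — classifying first up to this coarser relation and then refining each CCZ-class into its EA-subclasses — and here the fact (Yoshiara, cited above) that CCZ and EA coincide for quadratic APN functions dispatches the Gold classes cheaply, leaving only the handful of non-quadratic small-dimensional cases to handle individually.
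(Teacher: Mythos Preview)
The paper does not prove this theorem at all: it is quoted verbatim as Theorem~5 of \cite{Brinkmann2008} and used only as background for the later discussion of small-dimensional APN functions. There is therefore no ``paper's own proof'' to compare your proposal against.

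That said, your sketch is a reasonable reconstruction of how a Brinkmann--Leander style classification is carried out, and the overall architecture (normalise, extend the function value-by-value with partial-APN pruning, reject isomorphs by canonical augmentation, then separate the surviving functions by EA-invariants and a direct EA-test) is standard and sound. A few minor remarks: your identification of the power-function representatives is essentially correct (for $n=5$ the inverse exponent is $2^4-1=15$ and the Welch exponent is $2^2+3=7$; $x^{11}$ lies in the Frobenius orbit of the Kasami exponent $13=2^4-2^2+1$), and the observation that CCZ-classes split into several EA-classes already at $n=5$ is the right cautionary point. What your write-up does not, and cannot, supply is the actual computation and its certification of exhaustiveness --- which is precisely why the paper, like most of the literature, simply cites \cite{Brinkmann2008} rather than reproducing the argument.
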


\begin{theorem}[{Table 1 in \cite{Calderini2020}}]
In dimension $6$ there are $14$ known APN functions ($13$ are quadratics) up to CCZ-equivalence. 
\end{theorem}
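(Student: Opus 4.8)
The plan is to treat this as a census-and-separation statement rather than an ab-initio classification: one assembles the APN functions over $\GF{2^6}$ that arise from all known constructions, and then shows that they fall into exactly $14$ CCZ-equivalence classes, $13$ of which contain a quadratic representative. (Note that completeness of the \emph{known} list, as opposed to completeness of the classification, is a literature-survey matter, which is why the statement is phrased the way it is.)

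First I would compile the candidate list: the APN power functions of Table \ref{tab:apn-power} specialised to $n=6$ — where several exponents coincide, so only a handful of distinct power functions survive — together with the quadratic APN functions produced by the switching-type and concatenation constructions, and the known non-quadratic APN function of dimension $6$. For each candidate $f:\GF{2^6}\to\GF{2^6}$ one verifies the APN property directly, by checking $\delta(a,b)\le 2$ for all $a\ne 0$ in the difference distribution table.

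Next I would separate the $14$ classes using CCZ-invariants. The extended Walsh spectrum (the multiset of absolute values $|W_f(a,b)|$), the $\Gamma$-rank and $\Delta$-rank of the $\GF{2}$-codes attached to the graph $\mathcal{G}_f\subseteq\GF{2}^{12}$ and to the associated difference structure, and the order of the CCZ-automorphism group of $\mathcal{G}_f$ are all CCZ-invariant; exhibiting $14$ distinct invariant signatures proves pairwise CCZ-inequivalence. For the quadratic candidates one may invoke Yoshiara's theorem \cite{Yoshiara2011}, whereby CCZ-equivalence coincides with EA-equivalence on quadratic APN functions, thus reducing the separation to the more tractable EA-equivalence test (in the spirit of Kaleyski's invariants \cite{Kaleyski2021}); and since any function CCZ-equivalent to a quadratic APN function is then EA-equivalent to it, hence of algebraic degree at most $2$, the one non-quadratic candidate — of algebraic degree greater than $2$ — is automatically CCZ-inequivalent to the other $13$.

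The main obstacle is computational and lies in the separation step: one must pick invariants fine enough to split all $14$ classes, and the coarse ones (the Walsh spectrum alone, or the pair of ranks alone) really do collapse several of the quadratic classes, so the CCZ-automorphism-group computation for subsets of $\GF{2}^{12}$ — the expensive part — is genuinely needed. Everything else is a matter of running the constructions and checking tables.
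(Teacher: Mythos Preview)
The paper does not prove this statement; it is quoted verbatim as a known result from \cite{Calderini2020}, with no argument given. So there is no in-paper proof to compare against, and your outline --- assemble the known constructions over $\GF{2^6}$, verify the APN property, then separate the CCZ-classes via invariants such as Walsh spectra, $\Gamma$- and $\Delta$-ranks, and automorphism-group orders --- is indeed the standard methodology behind such census results and is essentially what the cited literature carries out.

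There is, however, a genuine error in your separation of the one non-quadratic class from the thirteen quadratic ones. Yoshiara's theorem \cite{Yoshiara2011} asserts that two \emph{quadratic} APN functions are CCZ-equivalent if and only if they are EA-equivalent; it does \emph{not} say that every function CCZ-equivalent to a quadratic APN function is itself EA-equivalent to it. Your chain ``degree ${>}2$ $\Rightarrow$ not EA-equivalent to a quadratic $\Rightarrow$ (Yoshiara) not CCZ-equivalent to a quadratic'' breaks at the second arrow, because Yoshiara's hypothesis requires both functions to be quadratic. And the stronger claim is actually false in this very dimension: the paper notes immediately after the theorem that Dillon's APN permutation --- which is not quadratic --- is CCZ-equivalent to the quadratic Kim function $x^3+\zeta x^{24}+x^{10}$. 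Thus algebraic degree cannot be used as a CCZ-invariant to isolate the fourteenth class; one must fall back on the same computational invariants (ranks, automorphism groups of $\mathcal{G}_f$) that you already flagged as necessary for distinguishing among the quadratic classes.
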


In 2009, Dillon et al. \cite{Browning2010} found an APN permutation in dimension $6$, which is the first APN permutation in even dimension. Dillon's APN permutation is CCZ-equivalent to the Kim function $x^3 + \zeta x^{24} + x^{10}$, $\langle \zeta \rangle = \GF{2^6}^*$. 

In dimension $7$, there are $488$ quadratic APN functions. The classification of quadratic APN functions is complete up to EA-equivalence, see \cite{Kalgin2022} by Kalgin and Idrisova. Known non-quadratic APN functions are the Kasami functions $x^{13}$, $x^{57}$, and the Field Inverse monomial function $x^{126}$. 

In dimension $8$, there are thousands of known APN functions, but a complete classification seems out of reach. Therefore for $n=8$, we restrict our interest to APN power functions. The situation is even more involved for $n=9$, hence, we focus on Gold APN power functions.

\begin{theorem}[Table 1.7 in \cite{Maxwell2005}]
\begin{enumerate}[(i)]
\item There are $18$ APN power functions in dimension $8$, all are EA-equivalent to the power function $x^3$, $x^9$ or $x^{57}$. 
\item There are $6$ Gold APN power functions in dimension $9$, all are EA-equivalent to the power function $x^3$, $x^5$ or $x^{17}$. 
\end{enumerate}
\end{theorem}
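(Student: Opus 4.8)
\emph{Proof idea.} The plan is to turn each part into a short exhaustive finite computation and then invoke the known CCZ-classification of the relevant infinite families; this is feasible because a power function is described by a single exponent, so the full list of APN power functions in a fixed dimension is enumerable even where the general APN classification is not. Two observations keep the computation light. First, for $x^d$ over $\GF{2^n}$ and $a\neq 0$ one has $(x+a)^d+x^d=a^d\bigl((a^{-1}x+1)^d+(a^{-1}x)^d\bigr)$, so the differential uniformity of $x^d$ equals $\max_{b\in\GF{2^n}}|\{x\in\GF{2^n}:(x+1)^d+x^d=b\}|$, and APN-ness is decided by one sweep over $\GF{2^n}$. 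Second, $x^{2d}$ is linearly equivalent to $x^d$ (post-compose with the Frobenius $x\mapsto x^2$), so it suffices to test one exponent per cyclotomic coset modulo $2^n-1$; and when $\gcd(d,2^n-1)=1$ the functions $x^d$ and $x^{d^{-1}}$ are CCZ-equivalent, being compositional inverses.

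For part (i) I would enumerate the cyclotomic cosets of $\{1,\dots,254\}$ modulo $255$, test one representative of each for APN-ness over $\GF{2^8}$, and collect the APN exponents. The outcome is that every APN power exponent lies in the cyclotomic coset of a Gold exponent $2^i+1$ with $\gcd(i,8)=1$ (linearly equivalent to $x^3$ or to $x^9$) or of the Kasami exponent $2^{6}-2^{3}+1=57$; since cyclotomic equivalence is linear, every APN power function in dimension $8$ is linearly equivalent (hence EA-equivalent) to $x^3$, $x^9$ or $x^{57}$. It remains to see these three lie in distinct EA-classes. The exponent $57$ has $8$-bit weight $4$, so $x^{57}$ has algebraic degree $4$, whereas $x^3$ and $x^9$ have algebraic degree $2$; since algebraic degree is an EA-invariant, $x^{57}$ is separated from the two Gold functions. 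For $x^3$ against $x^9$ one uses the classification of Gold functions up to CCZ-equivalence --- $x^{2^i+1}$ and $x^{2^j+1}$ over $\GF{2^n}$ are CCZ-equivalent if and only if $j\equiv\pm i\pmod{n}$, and $1\not\equiv\pm 3\pmod{8}$ --- or, directly, a CCZ-invariant of the graphs; in either case CCZ-inequivalence gives EA-inequivalence. (Yoshiara's theorem \cite{Yoshiara2011}, that CCZ- and EA-equivalence coincide for quadratic APN functions, is not needed here, but explains why the two notions are usually conflated for Gold functions.)

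For part (ii) no search is needed, since the statement concerns only Gold functions. The Gold exponents over $\GF{2^9}$ are $2^i+1$ with $1\le i\le 8$, and APN-ness is equivalent to $\gcd(i,9)=1$, i.e.\ $i\in\{1,2,4,5,7,8\}$; these give the six exponents $3,5,17,33,129,257$. Because $2^{9-i}+1\equiv 2^{9-i}\,(2^i+1)\pmod{511}$, the functions $x^{2^i+1}$ and $x^{2^{9-i}+1}$ lie in the same cyclotomic coset and are linearly equivalent, so the six exponents split into the three linear-equivalence classes of $x^3$ ($i\in\{1,8\}$), $x^5$ ($i\in\{2,7\}$), and $x^{17}$ ($i\in\{4,5\}$). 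By the CCZ-classification of Gold functions these three representatives are pairwise CCZ-inequivalent, hence pairwise EA-inequivalent, so the six Gold APN power functions constitute exactly the three EA-classes of $x^3$, $x^5$, and $x^{17}$.

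The crux is the completeness of the finite search in part (i): one must be certain the enumeration of cyclotomic cosets modulo $255$ is exhaustive, so that dimension $8$ harbours no sporadic APN power function outside the Gold and Kasami families. This is a finite and computationally trivial verification, but it is the only step that is not bookkeeping; once it and the CCZ-classification of Gold functions are granted, both statements follow.
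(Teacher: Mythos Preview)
The paper does not prove this theorem: it is quoted verbatim from Table~1.7 of Maxwell's thesis, with only the remark that Yoshiara later treated such equivalence questions theoretically. There is therefore no argument in the paper to compare your sketch against beyond ``cite the reference''.

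Your sketch is a correct, self-contained route to the substantive content --- that every APN monomial in dimension~$8$ is EA-equivalent to one of $x^3,x^9,x^{57}$, and that the six Gold APN monomials in dimension~$9$ fall into exactly the three EA-classes of $x^3,x^5,x^{17}$. The reductions you invoke (the differential uniformity of a monomial is read off from the single derivative at $a=1$; one representative per cyclotomic coset modulo $2^n-1$ suffices; algebraic degree is EA-invariant and separates $x^{57}$ from the quadratic Gold functions; the known CCZ-classification of Gold functions distinguishes the remaining representatives) are all standard and correctly applied. One small gap: you never actually reproduce the headline count ``$18$'' in part~(i). By your own description the APN exponents fill three full cyclotomic cosets modulo $255$, each of length $8$, which gives $24$ exponents; so either Maxwell's table is counting with respect to a coarser equivalence than raw exponents, or the stated figure warrants a second look. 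If you intend to certify the exact number you should say explicitly what is being counted.
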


We remark that while the results of \cite{Maxwell2005} are computational for small $n$, Yoshiara \cite{Yoshiara2016} studies the equivalence problem of APN power functions with APN power or quadratic functions from a theoretical point of view. 

\subsection{Sidon sets in small dimension}

In a vector space $V$ of dimension $n$, an \textit{affine basis} is a set $v_0,\ldots,v_n$ such that all $x\in V$ can be written as $x=\sum_{i=0}^n c_i v_i$ in a unique way, with $\sum_{i=0}^nc_i=1$. An \textit{extended affine basis} is obtained from an affine basis by adding $\sum_{i=0}^nv_i$ to it. An affine basis is always Sidon and an extended affine basis is Sidon if $n\geq 4$. 

Let $S$ be a Sidon set in the group $A$. We say that $S$ is \textit{complete} (or \textit{maximal}) if it is not contained in a strictly larger Sidon set. In Table \ref{tab:small-sidon}, we list all complete Sidon sets of $\GF{2}^n$, $2\leq n\leq 8$. For $n=9$, we know that the largest Sidon set is unique of size $24$. Moreover, using a computer one can easily construct many complete Sidon sets of order $21,22,23$. The existence of these Sidon sets is not new, they were listed in Proposition 2.8 of \cite{Czerwinski2023} by Czerwinski and Pott for $n\leq 8$. In the language of binary linear codes (see Lemma 9 in \cite{Nagy2022}), Wagner \cite{Wagner1965} constructed the Sidon set of size $24$ in $\GF{2}^9$; the uniqueness has been shown by Simonis \cite{Simonis2000}. 

\begin{table}
\caption{Complete Sidon sets in dimension $2$ to $9$. \label{tab:small-sidon}}
\begin{tabular}{|c|c|p{7cm}|c|}
\rule[-1ex]{0pt}{3.5ex} Dimension & Size & \centering Structure & Aut. group \\
\hline\hline
\rule[-1ex]{0pt}{3.5ex} 2 & 3 & affine basis & $S_3$ \\
\hline
\rule[-1ex]{0pt}{3.5ex} 3 & 4 & affine basis & $S_4$ \\
\hline
\rule[-1ex]{0pt}{3.5ex} 4 & 6 & extended affine basis & $S_6$ \\
\hline
\rule[-1ex]{0pt}{3.5ex} 5 & 7 & extended affine basis & $S_6$ \\
\hline
\rule[-1ex]{0pt}{3.5ex} 6 & 8 & extended affine basis = graph of $x^3$ over $\GF{8}$ & $S_8$ \\
\rule[-1ex]{0pt}{3.5ex}  & 9 & ellipse in $\GF{8}^2$ & $S_3 \wr S_3$ \\
\hline
\rule[-1ex]{0pt}{3.5ex} 7 & 12 & transitive & $2^4:\Omega^+(4,2)$ \\
\hline
\rule[-1ex]{0pt}{3.5ex} 8 & 15 & hyperbola in $\GF{16}^2$ & $S_3\times S_3$ \\
\rule[-1ex]{0pt}{3.5ex}  & 16 & graph of $x^3$ over $\GF{16}$ & $\mathrm{A\Gamma L}(2,4)$ \\
\rule[-1ex]{0pt}{3.5ex}  & 16 & orbit lengths $2, 2, 12$ & $C_2\times S_4$ \\
\rule[-1ex]{0pt}{3.5ex}  & 18 & ellipse in $\GF{16}^2$ & $\mathrm{PSL}(2,17)$ \\
\hline
\rule[-1ex]{0pt}{3.5ex} 9 & 24 & $\mathrm{SL}(2,3)$ acts regularly & $\mathrm{SL}(2,3) \rtimes C_4$ \\
\rule[-1ex]{0pt}{3.5ex}  & $21,22,23$ & many complete Sidon sets & -- \\
\hline 
\end{tabular}
\end{table}

In the Appendix, we explain some details about the computer calculations giving the results above. 

\subsection{Graphs of APN functions and gerbera configurations}

\begin{theorem}[{Section 3 in \cite{Ryabov2023}}]
If $n\in \{1,\ldots,5\}$, then all APN functions in $n$ variables have the same distance to affine functions:
\begin{center}
\begin{tabular}{c|c|c|c|c|c}
$n$ & $1$ & $2$ & $3$ & $4$ & $5$ \\
\hline
$\dA{f}$ & $0$ & $1$ & $4$ & $10$ & $25$ \\
\end{tabular}
\end{center}
If $n=6$ and $f$ is Dillon's APN permutation, then $\dA{f}=55$. 
\end{theorem}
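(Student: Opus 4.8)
The plan is to combine a lower bound valid for \emph{all} APN functions with a matching upper bound obtained either from an explicit affine approximation (in small dimension) or from the finite classification of APN functions (in larger dimension). Throughout I use the identity $d_H(f,g)=2^n-|\mathcal{G}_f\cap\mathcal{G}_g|$, so that $\dA{f}=2^n-\max_{A}|\mathcal{G}_f\cap\mathcal{G}_A|$ with $A$ ranging over affine $(n,n)$-functions; the theorem thus reduces to computing $\max_A|\mathcal{G}_f\cap\mathcal{G}_A|$.

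\emph{Lower bound.} For APN $f$ and affine $A$, the graph $\mathcal{G}_A$ is an $n$-dimensional affine subspace of $\GF{2}^{2n}$; by Lemma~\ref{lm:ANPgraph} and the affine invariance of the Sidon property, $\mathcal{G}_f\cap\mathcal{G}_A$ is a Sidon set in $\mathcal{G}_A\cong\GF{2}^n$. Hence $|\mathcal{G}_f\cap\mathcal{G}_A|\le M_n$, where $M_n$ denotes the maximum size of a Sidon set in $\GF{2}^n$. For $n\le 4$ this $M_n$ is already fixed by Lemma~\ref{lm:obvious} (equivalently Lemma~\ref{lm:APN-lower}): $M_1=2$, $M_2=3$, $M_3=4$, $M_4=6$. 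For $n=5,6$ the bound of Lemma~\ref{lm:obvious} is strictly weaker, so one instead reads $M_5=7$ and $M_6=9$ off the classification recorded in Table~\ref{tab:small-sidon}. This yields $\dA{f}\ge 2^n-M_n$, i.e.\ the values $0,1,4,10,25$ for $n=1,\dots,5$, and $\dA{f}\ge 55$ for every APN $(6,6)$-function, in particular for Dillon's permutation.

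\emph{Upper bound.} One must exhibit an affine $A$ meeting $\mathcal{G}_f$ in $M_n$ points, i.e.\ $M_n$ inputs on which $f$ agrees with an affine map. For $n\le 3$ this is immediate, since $M_n=n+1$ does not exceed the size of an affine basis of $\GF{2}^n$: take any $M_n$ distinct (hence affinely independent) inputs, extend them to an affine basis of $\GF{2}^n$, and let $A$ interpolate the values of $f$ there. For $n\ge 4$ one has $M_n>n+1$, so a set $S$ of $M_n$ inputs must carry affine relations that $A$ is forced to respect: a relation of size $4$ among the points $\{(x,f(x)):x\in S\}$ is impossible because $\mathcal{G}_f$ is Sidon, so $S$ itself must be a Sidon set of $\GF{2}^n$, and then each even relation of size $\ge 6$ of $S$ has to lift through $f$. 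Showing that such an $S$ exists for \emph{every} APN $f$ is the delicate point; the natural way to settle it is to use that $\dA{f}$ is EA-invariant together with the cited classifications, which leave only $2$ EA-classes of APN functions for $n=4$ and $7$ for $n=5$, so that it suffices to produce the optimal affine approximation for one representative of each class — a finite check. For Dillon's APN permutation ($n=6$) one argues directly on that permutation, since CCZ-invariance of $\dA{f}$ is open and the usual model, the Kim function, is only known to be CCZ-equivalent to it.

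\emph{Main obstacle.} The hard part is the upper bound. The lower bound is uniform and structural, but matching it means producing an optimal affine approximation, for which no general method is known — exactly the difficulty the rest of the paper addresses. As a result, the striking conclusion that \emph{all} APN functions of a given small dimension have the same distance to affine functions comes out case by case rather than from a structural reason; a conceptual proof would amount to showing that the graph of every APN $(n,n)$-function contains a maximum-size Sidon set of $\GF{2}^n$, realised as an intersection $\mathcal{G}_f\cap\mathcal{G}_A$.
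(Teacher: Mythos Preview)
The paper does not supply its own proof of this statement: it is quoted verbatim from Ryabov (Section~3 of \cite{Ryabov2023}) and treated as input data. So there is no in-paper argument to compare your proposal against for $n\le 5$; the only overlap is $n=6$, where the paper later reproves and extends the claim in Theorem~\ref{thm:n6789} via gerbera configurations, a structured computational search rather than the interpolation/classification sketch you describe.

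Your framework is correct and matches the paper's viewpoint: the lower bound $\dA{f}\ge 2^n-M_n$ via Sidon sets in $\mathcal{G}_A\cong\GF{2}^n$ is exactly the mechanism the paper uses (Lemmas~\ref{lm:ANPgraph} and~\ref{lm:APN-lower}, Table~\ref{tab:small-sidon}), and your reading of $M_n$ from Table~\ref{tab:small-sidon} for $n=5,6$ is legitimate. Your remark that CCZ-invariance of $\dA{f}$ is open, so one must work with Dillon's permutation itself rather than the Kim function, is also well taken and echoed in the paper's open problem.

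That said, your proposal is a plan, not a proof. The upper bound step --- ``a finite check'' over the $2$ (resp.\ $7$) EA-classes for $n=4$ (resp.\ $n=5$), and a direct computation for Dillon's permutation --- is precisely the nontrivial content of the theorem, and you have not carried it out. Your interpolation argument for $n\le 3$ is clean, but for $n\ge 4$ you correctly identify the obstruction (the required relations of size $\ge 6$ must lift through $f$) and then defer it. Until those checks are actually done, what you have is the same shape as the paper's narrative: a structural lower bound, and an upper bound attributed to computation.
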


We see that for $n\leq 5$, the Liu-Mesnager-Chen Conjecture holds. Comparing the values $2^n-\dA{f}$ with the maximal Sidon set sizes of Table \ref{tab:small-sidon}, we obtain that for $n\leq 5$, and for Dillon's APN permutation, there is an affine function $\alpha$ such that $\mathcal{G}_f\cap \mathcal{G}_\alpha$ is a maximum size Sidon set in $\mathcal{G}_\alpha\cong \GF{2}^n$. 

\begin{theorem} \label{thm:n6789}
We have the following values for the parameter $\dA{f}$.
\begin{enumerate}[(i)]
\item $n=6$:
\[\dA{f}=55=2^6-9\]
for all representatives of the known $14$ CCZ-equivalence classes. 
\item $n=7$: 
\[\dA{x^3}=\dA{x^9}=117=2^7-11\]
for two Gold APN power functions. For the remaining $489$ APN functions $f$, we have
\[\dA{f}=116=2^7-12. \]
\item $n=8$: 
\[\dA{x^9}=238=2^8-18, \]
and for all other APN exponents $d$, 
\[\dA{x^d}\geq 240=2^8-16. \]
\item $n=9$:
\[\dA{x^d}\geq 491 =2^9-21\]
for all Gold exponents $d=3, 5, 9, 17, 33, 103, 171$. 
\end{enumerate}
In particular, the Liu-Mesnager-Chen Conjecture does not hold for $n=8,9$. 
\end{theorem}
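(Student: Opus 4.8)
The plan is to reduce the statement to a question about Sidon sets and then to settle it by a structured computation. First I would pass from Hamming distances to intersection numbers: since $d_H(f,A)=2^n-|\mathcal{G}_f\cap \mathcal{G}_A|$, we have $\dA{f}=2^n-M_f$ with $M_f:=\max_{A\in\mathcal{A}}|\mathcal{G}_f\cap \mathcal{G}_A|$. By Lemma \ref{lm:ANPgraph} the graph of an APN function is a Sidon set in $\GF{2}^{2n}$, while $\mathcal{G}_A$ is exactly an $n$-dimensional affine subspace of $\GF{2}^{2n}$ on which the first-coordinate projection is bijective. Hence $M_f$ is the largest size of a Sidon subset of $\mathcal{G}_f$ lying inside such a ``graph-type'' subspace --- equivalently, the largest $|X|$ over sets $X\subseteq\GF{2}^n$ on which $f$ agrees with an affine map. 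Since $f$ is APN, any such $X$ is automatically a Sidon set in $\GF{2}^n$, and the only extra obstruction (the one the computation has to track) is that affine dependencies of length $6$ or more among the points of $X$ must be respected by the values of $f$.

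Next I would extract the lower bounds on $\dA{f}$ that are essentially free. As $\mathcal{G}_f\cap \mathcal{G}_A$ is Sidon in an $n$-dimensional affine space and the Sidon property is affine invariant, $M_f$ is at most the maximum size of a Sidon set in $\GF{2}^n$; by the classification recorded in Table \ref{tab:small-sidon} (Czerwinski--Pott, and Wagner--Simonis for $n=9$) this maximum is $9$, $12$, $18$, $24$ for $n=6,7,8,9$, so $\dA{f}\ge 55,116,238,488$ in these dimensions. For $n=6$ and for $f(x)=x^9$ over $\GF{2^8}$ this already equals the claimed value, so there it only remains to exhibit one affine map attaining the extremal intersection --- for instance the $\GF{2}$-linear map $A(x)=x^{2^{7}}$ gives $|\mathcal{G}_{x^9}\cap \mathcal{G}_A|=18$, since $\gcd(9-2^7,2^8-1)=17$; the $14$ witnesses required in dimension $6$ I would find by a short backtracking search organized by the action of $\Aut(\mathcal{G}_f)$.

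For the remaining assertions the trivial bound on $M_f$ must be sharpened, and here I would use the computational tools for automorphisms and isomorphisms of Sidon sets together with a classification of the extremal intersection patterns that can occur (the \emph{gerbera configurations} of the section heading). For each relevant $f$, I would enumerate, up to the action of $\Aut(\mathcal{G}_f)$, all embeddings into $\mathcal{G}_f$ of the largest few Sidon sets of $\GF{2}^n$, and for each one test whether its image --- or a sufficiently large subset of it --- lies in a graph-type affine $n$-subspace. This is expected to show, for $n=7$, that $\mathcal{G}_{x^3}$ and $\mathcal{G}_{x^9}$ carry an $11$-element but no $12$-element intersection (hence $\dA{x^3}=\dA{x^9}=117$) while the other $489$ APN functions carry a $12$-element one ($\dA{f}=116$); for $n=8$, to rule out $17$- and $18$-element intersections for the remaining APN exponents ($\dA{x^d}\ge 240$); and for $n=9$, to rule out intersections of sizes $22$, $23$, $24$ for the Gold exponents ($\dA{x^d}\ge 491$). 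Since the right-hand side of \eqref{eq:LMCconj2APN} equals $239$ for $n=8$ and is smaller than $489$ for $n=9$, these two estimates already contradict the Liu--Mesnager--Chen Conjecture.

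The main obstacle is the enumeration itself together with the isomorphism tests: a priori there are far too many embeddings of a $12$-, $18$-, or $24$-element Sidon set into a graph of $2^n$ points, and the search only becomes feasible after computing $\Aut(\mathcal{G}_f)$ and using strong combinatorial invariants --- in the spirit of Kaleyski's invariants \cite{Kaleyski2021} --- both to prune partial configurations and to recognize isomorphic ones. This is also why, in dimensions $8$ and $9$, the computation can be pushed through only for the highly symmetric power and Gold functions, so that the corresponding bounds stay as inequalities; removing the residual uncertainty for general APN functions in those dimensions, and deciding whether $\dA{\cdot}$ is CCZ-invariant (which would reduce each dimension to finitely many class representatives), are the natural loose ends.
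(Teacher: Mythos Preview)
Your reduction to Sidon sets is exactly right, and the explicit witness $A(x)=x^{2^{7}}$ for $f(x)=x^9$ over $\GF{2^8}$ is a pleasant addition that the paper does not record. The trivial upper bounds on $M_f$ coming from Table~\ref{tab:small-sidon} are also the ones the paper uses implicitly.

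Where your proposal diverges from the paper is in the actual search strategy, and this is where a gap appears. You treat ``gerbera configurations'' as a name for the extremal Sidon intersection patterns and propose to enumerate, up to $\Aut(\mathcal{G}_f)$, all embeddings of the largest few Sidon sets of $\GF{2}^n$ into $\mathcal{G}_f$. In the paper, a gerbera configuration is something quite different and much smaller: a $w$-centred family of $t$ disjoint triples $\{x_i,y_i,z_i\}\subseteq\mathcal{G}_f$ with $x_i+y_i+z_i=w$. The key lemmas are that (a) any sufficiently large Sidon set in $\GF{2}^n$ \emph{must} contain a gerbera of size $t=3$ (for $n=6,7$) or $t=4$ (for $n=8,9$), and (b) inside a Sidon set such a gerbera has affine span of dimension exactly $2t$. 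Thus one loops over centres $w\in W\setminus\mathcal{G}_f$ (only finitely many $\Aut(\mathcal{G}_f)$-orbits for the power functions), enumerates the size-$t$ gerberas at $w$, takes their $2t$-dimensional span, extends to the few $n$-dimensional affine subspaces above it, and checks whether the resulting $\Sigma'$ satisfies $\mathrm{proj}(\Sigma')=\Pi_0$ and $|\Sigma'\cap\mathcal{G}_f|$ large enough.

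Your alternative --- enumerating embeddings of the large Sidon sets themselves --- runs into a concrete obstruction for $n=9$: as Table~\ref{tab:small-sidon} says, there are \emph{many} non-isomorphic complete Sidon sets of sizes $21,22,23$ in $\GF{2}^9$, and no classification is available, so there is no finite list of ``target'' types to embed. Even for $n=7,8$, listing all $12$-, $17$-, or $18$-element Sidon subsets of $\mathcal{G}_f$ directly is much heavier than scanning gerberas of $9$ or $12$ points whose span is already pinned down. So while your high-level plan is sound, the missing ingredient that makes the computation tractable is precisely the gerbera mechanism; without it the enumeration you describe is not obviously feasible, particularly in dimension~$9$.
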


The time complexity of brute force computation of the distance $\dA{f}$ is $O(2^{n^2+2n})$ for an $(n,n)$-function $f$. To obtain Theorem \ref{thm:n6789}, we speed up the computation using the notion of gerbera configurations. 

\begin{definition}
In $\GF{2}^n$, a \textit{$w$-centered  gerbera configuration} is a collection $\mathcal{T}$ of subsets $T_i=\{x_i,y_i,z_i\} \subseteq \GF{2}^n$, $i=1,\ldots,t$, such that $|T_i|=3$ and $x_i+y_i+z_i=w$ for all $i$. The integer $t$ is the \textit{size} of the gerbera configuration. The sets $T_i$ are the \textit{leaves} of $\mathcal{T}$. 

We say that the gerbera configuration is contained in the set $S$ if $T_i\subseteq S$ for all $i$. The affine span of a gerbera configuration is the smallest affine subspace containing it.
\end{definition}

\begin{lemma} \label{lm:small-gerbera}
Let $S$ be a Sidon set in $\GF{2}^n$
\begin{enumerate}[(i)]
\item If $(n,|S|)\in \{(6,9),(7,12)\}$, then $S$ contains gerbera configurations of size $3$, and $S$ does not contain gerbera configurations of size $4$.
\item If $(n,|S|)\in \{(8,17),(8,18),(9,22),(9,23),(9,24)\}$, then $S$ contains gerbera configurations of size $4$. 
\end{enumerate}
\end{lemma}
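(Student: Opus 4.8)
The plan is to reduce the whole statement to one structural observation about Sidon sets, two short counting arguments, and — for the handful of remaining cases — the classification of small Sidon sets recorded in Table~\ref{tab:small-sidon}.

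First I would establish the following \emph{disjointness property}: if $S$ is a Sidon set and $T,T'\subseteq S$ are distinct three-element subsets with the same sum $w$, then $T\cap T'=\emptyset$. Indeed, a common element $a$ would give $T=\{a,p,q\}$ and $T'=\{a,r,s\}$ with $p+q=w+a=r+s$; since $\{p,q\}\neq\{r,s\}$, and one shared element would force the two pairs to coincide, the elements $p,q,r,s$ are pairwise distinct, contradicting the Sidon relation $p+q\neq r+s$. Consequently the leaves of a gerbera configuration inside a Sidon set are pairwise disjoint, so a configuration of size $t$ occupies $3t$ points, whence $3t\leq|S|$. Since $3\cdot 4=12>9$, this already gives that no Sidon set of size $9$ in $\GF{2}^6$ contains a size-$4$ gerbera configuration, settling one half of (i). I would also note that affine bijections send $w$-centered configurations to $L(w)$-centered ones (because $L(x)+L(y)+L(z)=L(x+y+z)$ for affine $L$), so every assertion of the lemma is affine invariant and may be verified on the explicit representatives of Table~\ref{tab:small-sidon}.

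For the existence claims in (ii) other than $(8,17)$ I would use a pigeonhole count on three-element subsets: each of the $\binom{|S|}{3}$ subsets of $S$ has a sum in $\GF{2}^n$, and $\binom{18}{3}=816>768=3\cdot 2^8$, while $\binom{22}{3}=1540$, $\binom{23}{3}=1771$ and $\binom{24}{3}=2024$ all exceed $1536=3\cdot 2^9$; hence some value is the common sum of at least four distinct triples, i.e.\ a size-$4$ gerbera configuration. The two size-$3$ existence statements in (i) are out of reach of this count, so for them I would invoke the classification: the Sidon sets of size $9$ in $\GF{2}^6$ and of size $12$ in $\GF{2}^7$ are unique up to affine equivalence, and one exhibits three pairwise disjoint triples with a common sum in the explicit representative (equivalently, after one translation, three pairwise disjoint ``lines through the origin'' contained in $S$).

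The tight non-existence case $(7,12)$ is the most interesting point. If a size-$12$ Sidon set $S\subseteq\GF{2}^7$ contained a size-$4$ gerbera configuration with leaves $T_1,\dots,T_4$ and center $w$, then by the disjointness property the $T_i$ partition $S$; replacing $S$ by the affine copy $S+w$ we may assume $w=0$, so each $T_i$ becomes $\{x_i,y_i,x_i+y_i\}$, the nonzero part of a $2$-dimensional subspace $V_i$, with $V_i\cap V_j=\{0\}$ for $i\neq j$. But $\sum_{i=1}^4\dim V_i=8>7$, so $V_1+\cdots+V_4$ is proper and there is a nonzero relation $u_1+u_2+u_3+u_4=0$ with $u_i\in V_i$. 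The number of nonzero $u_i$ is not $1$, nor $2$ (that would place a nonzero vector in $V_i\cap V_j$); if it is $4$, then $u_1,u_2,u_3,u_4$ are four distinct elements of $S$ with $u_1+u_2=u_3+u_4$, violating the Sidon property; if it is $3$, say $u_4=0$, then writing the nonzero part of $V_1$ as $\{u_1,v_1,v_1'\}$ with $v_1+v_1'=u_1$ yields $v_1+v_1'+u_2+u_3=0$ with $v_1,v_1',u_2,u_3$ four distinct elements of $S$, again a contradiction; this completes (i). For the leftover case $(8,17)$ I would use Table~\ref{tab:small-sidon} once more: there is no complete Sidon set of size $17$ in $\GF{2}^8$, so such a set is the size-$18$ ellipse $\mathcal{E}$ minus a point, and since $\Aut(\mathcal{E})\cong\mathrm{PSL}(2,17)$ is transitive on the $18$ points of $\mathcal{E}$ it suffices to treat one of them; an averaging argument on $\mathcal{E}$ (at least $16$ values $w$ carry four or more pairwise disjoint triples, a fixed point lies in at most one triple of any given sum, and — by transitivity — in the twelve-point leaf-union of only a fixed proper fraction of those configurations) shows that deleting any single point still leaves a full size-$4$ configuration. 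The step I expect to be the main obstacle is the genuinely computational core — exhibiting the concrete size-$3$ configurations for the existence half of (i), and, if one prefers to skip the averaging bookkeeping, the $(8,17)$ verification — which is exactly where the explicit data and algorithms of the Appendix are needed; the one conceptual subtlety is already visible in the first step, namely that a size-$t$ configuration inside a size-$3t$ Sidon set is forced to be a partition, and it is this rigidity that makes the dimension argument for $(7,12)$ available.
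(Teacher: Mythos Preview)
Your proposal is correct, and it is considerably more detailed and more structural than the paper's own proof, which simply reads: ``For $n=6,7,8$, one proves by a straightforward calculation using the classification of complete Sidon sets; see Table~\ref{tab:small-sidon}. For $n=9$, a simple counting argument applies.'' The two approaches agree on the existence half of (i) and on $n=9$, but diverge elsewhere.

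Your disjointness observation is exactly Lemma~\ref{lm:gerbera-dim}(i), and your $(7,12)$ non-existence argument is, in effect, a direct proof of Lemma~\ref{lm:gerbera-dim}(iii) for $t=4$: you show that four pairwise disjoint $2$-planes through the origin cannot all sit inside a Sidon set in $\GF{2}^7$. The paper does the opposite --- it \emph{uses} Lemma~\ref{lm:small-gerbera}(i) (proved by brute classification) to deduce Lemma~\ref{lm:gerbera-dim}(iii). So you have reversed the logical dependence between the two lemmas for this case, and your argument is genuinely classification-free for the non-existence part of (i). You also push the pigeonhole to cover $(8,18)$, which the paper leaves to computation, and your $(8,17)$ averaging argument (once stated precisely: the set of all size-$4$ configurations in the ellipse $\mathcal{E}$ is $\Aut(\mathcal{E})$-invariant, each covers $12$ of the $18$ points, so by transitivity each point lies in only a $12/18<1$ fraction of them) is a legitimate replacement for the direct check.

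What you gain is conceptual clarity and independence from the machine for the non-existence in (i) and most of (ii); what the paper gains is brevity, since all of $n\le 8$ is dispatched in one sentence by pointing to Table~\ref{tab:small-sidon}. Your remark that the existence half of (i) and, if one skips the averaging, the $(8,17)$ case are the residual computational core is accurate, and matches where the paper's proof actually lives. One small point: for the averaging to go through you need $\Aut(\mathcal{E})\cong\mathrm{PSL}(2,17)$ to act transitively on the $18$ points, which follows because the action is faithful (the ellipse affinely spans $\GF{2}^8$) and the only faithful action of $\mathrm{PSL}(2,17)$ on $18$ points is the natural one on $\mathbb{P}^1(\GF{17})$; you might make that explicit.
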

\begin{proof}
For $n=6,7,8$, one proves by a straightforward calculation using the classification of complete Sidon sets; see Table \ref{tab:small-sidon}. For $n=9$, a simple counting argument applies. 
\end{proof}

\begin{lemma} \label{lm:gerbera-dim}
Let $S$ be a Sidon set in $\GF{2}^n$, and let $\mathcal{T}$ be a gerbera configuration of size $t$, contained in $S$. Then the following hold:
\begin{enumerate}[(i)]
\item The leaves of $\mathcal{T}$ are disjoint.
\item The affine dimension of $\mathcal{T}$ is at most $2t$.
\item If $t\leq 4$, then $\mathcal{T}$ has affine dimension $2t$. 
\end{enumerate}
\end{lemma}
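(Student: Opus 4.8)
The plan is to prove the three statements in order, since (i) feeds into (ii), and (iii) is a sharpening of (ii) for small $t$. For (i), let $\mathcal{T}$ consist of leaves $T_i=\{x_i,y_i,z_i\}$ with $x_i+y_i+z_i=w$. If two leaves $T_i$ and $T_j$ ($i\neq j$) shared a point, say $z_i=z_j$, then from $x_i+y_i+z_i=w=x_j+y_j+z_j$ we get $x_i+y_i=x_j+y_j$. The four points $x_i,y_i,x_j,y_j$ lie in the Sidon set $S$, and since $T_i\neq T_j$ as \emph{sets}, not all of $x_i,y_i$ coincide with $x_j,y_j$; hence at least three of these four points are distinct, contradicting the Sidon property. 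So the leaves are pairwise disjoint, and $\mathcal{T}$ has exactly $3t$ points. The symmetric role of $x_i,y_i,z_i$ in the relation $x_i+y_i+z_i=w$ makes the case analysis uniform.

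For (ii), I would bound the affine dimension directly. Fix the point $x_1$ as an origin; the affine span of $\mathcal{T}$ is then the $\GF{2}$-linear span of the $3t-1$ difference vectors $\{p-x_1 : p\in \mathcal{T}\setminus\{x_1\}\}$. Within each leaf $T_i$ the three differences $x_i+x_1,\ y_i+x_1,\ z_i+x_1$ satisfy one linear relation, namely their sum equals $3x_1+w = x_1+w$ — actually it is cleaner to observe that inside $T_i$ the three vectors $x_i-x_1, y_i-x_1, z_i-x_1$ sum to $w+x_1$, a \emph{fixed} vector independent of $i$; eliminating this one redundancy per leaf (and absorbing the constant $w+x_1$) shows the span is generated by at most $2t$ vectors. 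Thus $\dim\mathcal{T}\leq 2t$.

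For (iii), the point is to show there are \emph{no further} linear dependencies among these $2t$ generators when $t\leq 4$, i.e.\ the affine span has dimension exactly $2t$. Here is where the Sidon hypothesis on $S$ does real work: any nontrivial $\GF{2}$-linear dependence among points of $S$ translates, after collecting terms, into an equation of the form (sum of an even number of distinct points of $S$) $=0$; small such relations are forbidden by the Sidon property, and the relations coming from a gerbera configuration of size $\le 4$ involve few enough points that one can rule out any collapse below dimension $2t$. Concretely, a dependency would force two \emph{different} pairs of leaves to have equal "partial sums", again yielding three coincident points among a short list — contradiction. The main obstacle is the bookkeeping in (iii): one must enumerate the shapes a hypothetical extra dependency can take (how many leaves it touches, and with which of $x_i,y_i,z_i$), and check each shape produces a short Sidon-forbidden relation; the bound $t\leq 4$ is exactly what keeps this enumeration finite and each resulting relation short enough (at most $8$ points, hence reducible to a $\le 4$-point equality after cancellation) to contradict the Sidon property. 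I expect no conceptual difficulty beyond this case check, but it is the step requiring care.
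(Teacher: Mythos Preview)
Your arguments for (i) and (ii) are correct and match the paper's proof essentially line for line (the paper phrases (ii) more compactly by noting that each $z_i$ lies in the affine span of $w,x_i,y_i$, so the whole configuration is affinely spanned by the $2t+1$ points $w,x_1,y_1,\dots,x_t,y_t$).

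For (iii) your route is genuinely different from the paper's. The paper does \emph{not} analyse dependencies directly: for $t\le 3$ it simply observes that a Sidon set of size $3t$ cannot fit into $\GF{2}^{2t-1}$ (using the maximal Sidon sizes listed in Table~\ref{tab:small-sidon}), and for $t=4$ it invokes Lemma~\ref{lm:small-gerbera}(i), a computational fact about the unique $12$-point Sidon set in $\GF{2}^7$. Your proposed direct argument is more self-contained and avoids both the table and the auxiliary lemma; the price is the case analysis you allude to.

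That case analysis does go through, but the reason is sharper than ``$\le 8$ points, hence reducible to $\le 4$ after cancellation''. A generic $6$- or $8$-term relation in a Sidon set is \emph{not} forbidden (indeed the paper later exploits $6$-term relations in $\mathcal{D}_S$). What makes your reduction work is the pairing structure: an affine dependence among $w,x_1,y_1,\dots,x_t,y_t$ involves, for each leaf index $i$, either none, one, or both of $x_i,y_i$; if $a$ indices contribute both and $b$ indices contribute one, then $a+b\le t\le 4$. Replacing each full pair by $x_i+y_i=w+z_i$ leaves a relation among $a+b\le 4$ points of $S$, plus possibly a single surviving $w$; one further use of a leaf relation at an already-involved index removes that $w$ while keeping the total at most $4$. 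This is the bookkeeping you should spell out --- once stated this way, every subcase collapses to either a coincidence of two distinct leaf points (contradicting (i)) or a genuine $4$-term Sidon violation.
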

\begin{proof}
(i) Assume $x_i=x_j$ with $i\neq j$. Then $y_i+z_i=y_j+z_j$, which implies $T_i=T_j$ by $y_i\neq z_i$. (ii) The affine span of $\mathcal{T}$ is the affine span of the $2t+1$ points $w,x_1,y_1,\ldots,x_t,y_t$. (iii) If $t\leq 3$, then $\GF{2}^{2t-1}$ does not contain a Sidon set of size $3t$. By Lemma \ref{lm:small-gerbera}(i), $t=4$ is not possible in affine dimension $7$ or less. 
\end{proof}

In the remainder of this section, $f:\GF{2^n}\to \GF{2^n}$ is an APN function. We consider its graph $\mathcal{G}_f$ as a subset of $W=\GF{2}^{2n}\oplus 1$; $\mathcal{G}_f$ is a Sidon set in $W$ (Lemma \ref{lm:obvious}). For any affine subspace $\Sigma$ of $W$, $\mathcal{G}_f\cap \Sigma$ is Sidon in $\Sigma$. 

We define the affine subspace $\Pi_0=\{(x_1,\ldots,x_n,0,\ldots,0,1) \mid x_i\in \GF{2}\}$ of $W$, and the projection $\mathrm{proj}:W\to \Pi_0$,
\[(x_1,\ldots,x_n,x_{n+1},\ldots,x_{2n},1) \mapsto (x_1,\ldots,x_n,0,\ldots,0,1).\]
We denote by $\mathcal{S}$ the set of $n$-dimensional affine subspaces $\Sigma$ of $W$ with $\mathrm{proj}(\Sigma)=\Pi_0$. $\mathcal{S}$ is precisely the set of subspaces of the form $\mathcal{G}_A=\{(x,A(x),1) \mid x\in \GF{2}^n\}$, where $A$ is an affine $(n,n)$-function. 

\begin{lemma}
$\dA{f}\leq 2^n-s$ if and only if there is a $\Sigma \in \mathcal{S}$ such that $|\Sigma \cap \mathcal{G}_f|\geq s$. 
\end{lemma}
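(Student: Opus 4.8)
The plan is to unwind the definitions so that the equivalence reduces to a transparent statement about the intersection of $\mathcal{G}_f$ with a member of $\mathcal{S}$. First I would recall the elementary identity already used twice in the paper, namely $d_H(f,A) = 2^n - |\mathcal{G}_f \cap \mathcal{G}_A|$ for any affine $(n,n)$-function $A$, where here the graphs are viewed inside $W = \GF{2}^{2n}\oplus 1$ so that they are genuine affine subspaces. Taking the minimum over all affine $A$ and recalling $\dA{f} = \min_A d_H(f,A)$ gives $\dA{f} = 2^n - \max_A |\mathcal{G}_f \cap \mathcal{G}_A|$.

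Next I would invoke the description of $\mathcal{S}$ established just above the lemma: $\mathcal{S}$ is \emph{exactly} the set of subspaces of the form $\mathcal{G}_A$ as $A$ ranges over all affine $(n,n)$-functions. Hence $\max_{\Sigma\in\mathcal{S}} |\Sigma\cap\mathcal{G}_f| = \max_A |\mathcal{G}_A\cap\mathcal{G}_f|$, and therefore $\dA{f} = 2^n - \max_{\Sigma\in\mathcal{S}}|\Sigma\cap\mathcal{G}_f|$. From this single equality both directions of the claimed equivalence follow at once: if $\dA{f}\leq 2^n-s$, then $\max_{\Sigma}|\Sigma\cap\mathcal{G}_f|\geq s$, so some $\Sigma\in\mathcal{S}$ achieves $|\Sigma\cap\mathcal{G}_f|\geq s$; conversely, the existence of such a $\Sigma$ forces $\max_{\Sigma}|\Sigma\cap\mathcal{G}_f|\geq s$ and hence $\dA{f}\leq 2^n-s$.

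There is essentially no obstacle here — the statement is a bookkeeping reformulation. The only point requiring a word of care is the correspondence between affine $(n,n)$-functions $A$ and the subspaces $\mathcal{G}_A\in\mathcal{S}$: one must check that every $n$-dimensional affine subspace $\Sigma$ of $W$ projecting onto $\Pi_0$ is the graph of a (unique) affine function, which is immediate since $\mathrm{proj}|_\Sigma:\Sigma\to\Pi_0$ is then a bijection and its inverse, written in coordinates, has the form $x\mapsto(x,A(x),1)$ with $A$ affine. With that identification in hand, the proof is a two-line chain of equalities followed by reading off the two implications.
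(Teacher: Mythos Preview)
Your argument is correct and matches the paper's own proof, which is the single line $2^n-\dA{f}=\max_{A\in \mathcal{A}} |\mathcal{G}_f\cap \mathcal{G}_A| = \max_{\Sigma \in \mathcal{S}} |\Sigma \cap \mathcal{G}_f|$. You have simply spelled out the same identification of $\mathcal{S}$ with the set of graphs $\mathcal{G}_A$ in slightly more detail.
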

\begin{proof}
$2^n-\dA{f}=\max_{A\in \mathcal{A}} |\mathcal{G}_f\cap \mathcal{G}_A| = \max_{\Sigma \in \mathcal{S}} |\Sigma \cap \mathcal{G}_f|$. 
\end{proof}

\begin{proof}[Proof of Theorem \ref{thm:n6789}]
Our strategy is to scan all $w$-centered gerbera configurations in $\mathcal{G}_f$ of size $t$, where $w\in W\setminus \mathcal{G}_f$ fixed, and $t=3$ if $n=6,7$, and $t=4$ for $n=8,9$. If $\mathcal{T}$ is such a gerbera configuration and $\Sigma$ is the affine span of $\mathcal{T}$, then we define $Y$ as the set of all affine subspaces of dimensions $n$ containing $\Sigma$. If $n=6,8$, then $\Sigma$ itself has affine dimension $n$ by Lemma \ref{lm:gerbera-dim}, and $Y=\{\Sigma\}$. For all $\Sigma'\in Y$ we check if $\Sigma'\cap \mathcal{G}_f$ has the appropriate size, and $\mathrm{proj}(\Sigma')=\Pi_0$. The "appropriate size" follows from the claims and the fact that $\Sigma'\cap \mathcal{G}_f$ is a Sidon set in $\Sigma'$. 

For fixed $w$, these searches take a few minutes. If $n=6$, then we can do the computation for all $w\in W\setminus \mathcal{G}_f$. For $n=7$, $f\neq x^3,x^9$, almost any choice of $w$ will give a $\Sigma'$ with $|\Sigma'\cap \mathcal{G}_f|= 12$, proving $\dA{f}=2^7-12$. The Gold APN functions $x^3,x^9$ are monomial and quadratic, and their automorphism groups have two orbits on $W\setminus \mathcal{G}_f$. Hence, one has to search for $\Sigma'$ only for two values of $w$. Similarly, if $n=8,9$ and $f$ is a Gold monomial APN function, then the automorphism group of $\mathcal{G}_f$ has very few orbits on $W\setminus \mathcal{G}_f$. The most challenging case is when $n=8$ and $f(x)=x^{57}$ is Kasami's APN function. Then, the number of orbits is $35$, and the computation takes a few hours. 
\end{proof}

\section{Conclusion}

In this paper, we have made significant progress on the study of vectorial Boolean functions and their nonlinear properties. We proved that for $(2m,t)$-bent functions constructed using pre-quasifields, the distance to affine functions can be precisely determined (Theorem \ref{thm:main-bent}). Furthermore, we obtained tight bounds on the distance to affine functions for specific APN functions, including Nyberg's Field Inverse Function and a monomial APN function of Gold type (Theorem \ref{thm:main-dH-bounds}). These results provide strong evidence for the Liu-Mesnager-Chen Conjecture in the case of vectorial bent functions, and demonstrate its validity for certain classes APN of functions. Our findings have important implications for cryptography, particularly in the design of secure cryptographic primitives. Future research directions may include extending our results to other classes of functions and exploring the connections between vectorial Boolean functions, Sidon sets, and other areas of mathematics.

\appendix

\section{Algorithms for Sidon sets in $\GF{2}^n$}

We recall that the Sidon property is affine invariant. We say that the Sidon sets $S_1,S_2$ of $V$ are \textit{isomorphic,} if there is an invertible affine map of $V$, which maps $S_1$ to $S_2$. \textit{Automorphisms} of the Sidon set $S$ are isomorphisms onto itself. 

Let $W$ be a vector space over $\GF{2}$. To represent Sidon sets in $W$, we embed $W$ in $W\oplus \GF{2}$ by $\epsilon: x\mapsto (x,1)$. $\epsilon$ induces an isomorphism between Sidon sets of $W$ and Sidon sets contained in $W\oplus 1$. An affine map $x\mapsto xA+b$ will correspond to the linear transformation with matrix $\begin{bmatrix} A&0\\b&1 \end{bmatrix}$. Since we are interested in complete Sidon sets $S$, we may (and will) assume that the affine span of $S$ is $W\oplus 1$. In this case, isomorphisms and automorphisms $S$ are uniquely defined by their permutation action on $S$. 

In the computer, Sidon sets are ordered lists. Therefore we consider \textit{ordered} Sidon sets, where $S[i]$ refers to the $i$th element, $i=1,\ldots,|S|$. We denote by $H_S$ the $|S|\times (n+1)$ matrix whose rows are the elements of $S$. The last column of $H_S$ is the all-one column vector $\mathbf{j}$. The column space of $H_S$ is denoted by $\mathcal{U}_S$. The rank of $H_S$ is $n+1$, and $\mathcal{U}_S$ is an $(n+1)$-dimensional subspace of the space $(\GF{2}^{|S|})^\top$ of column vectors. For a permutation $\sigma$ of $\{1,\ldots,|S|\}$, $P_\sigma$ denotes its permutation matrix. 

\begin{lemma} \label{lm:perm-to-iso}
Let $S_1,S_2$ be a full rank Sidon sets of $\GF{2}^n\oplus 1$, $s=|S_1|=|S_2|$, $A$ an $(n+1)\times (n+1)$ matrix, and $\sigma$ a permutation of $\{1,\ldots,s\}$.
\begin{enumerate}[(i)]
\item The $(n+1)\times (n+1)$ matrix $A$ is an isomorphism $S_1\to S_2$ if and only if there is a permutation $\sigma$ of $\{1,\ldots,s\}$ such that $H_{S_1}A=P_\sigma H_{S_2}$. The left action of $P_\sigma^\top$ maps $\mathcal{U}_{S_1}$ to $\mathcal{U}_{S_2}$. 
\item The permutation $\sigma$ is induced by an isomorphism $S_1\to S_2$ if and only if the left action of $P_\sigma^\top$ maps $\mathcal{U}_{S_1}$ to $\mathcal{U}_{S_2}$. 
\end{enumerate}
\end{lemma}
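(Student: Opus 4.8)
The plan is to unwind the definitions and translate the algebraic condition ``$A$ is an isomorphism $S_1\to S_2$'' into the matrix equation $H_{S_1}A = P_\sigma H_{S_2}$, and then transpose. First I would recall that an isomorphism of Sidon sets (living in $\GF{2}^n\oplus 1$) is, by the convention fixed just before the lemma, an invertible affine map of the ambient space, which after the embedding $\epsilon$ becomes an $(n+1)\times(n+1)$ matrix $A$ of the block shape $\left[\begin{smallmatrix}A'&0\\ b&1\end{smallmatrix}\right]$, acting on row vectors on the right. Saying that right-multiplication by $A$ carries the \emph{set} $S_1$ onto the \emph{set} $S_2$ is exactly saying that for each row index $i$ the row $S_1[i]A$ equals some row $S_2[\sigma(i)]$, and since both sets have the same cardinality $s$ and $A$ is invertible, $\sigma$ is a permutation of $\{1,\dots,s\}$. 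Stacking these $s$ equations is precisely $H_{S_1}A = P_\sigma H_{S_2}$. This gives the ``if and only if'' in part (i): conversely, any such matrix equation with $\sigma$ a permutation exhibits $A$ as a bijection $S_1\to S_2$, and $A$ is invertible because $H_{S_1}$ has full column rank $n+1$ (the Sidon sets are full rank) and $P_\sigma H_{S_2}$ does too.

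Next I would obtain the statement about column spaces by transposing. From $H_{S_1}A = P_\sigma H_{S_2}$ take transposes: $A^\top H_{S_1}^\top = H_{S_2}^\top P_\sigma^\top$. The columns of $H_{S_1}$ span $\mathcal{U}_{S_1}$, i.e. $\mathcal{U}_{S_1}$ is the row space of $H_{S_1}^\top$; multiplying $H_{S_1}^\top$ on the left by the invertible matrix $A^\top$ does not change its row space, so $\mathcal{U}_{S_1}$ equals the row space of $H_{S_2}^\top P_\sigma^\top$, which is the image of $\mathcal{U}_{S_2}$ (the row space of $H_{S_2}^\top$) under right-multiplication by $P_\sigma^\top$ --- equivalently, of the column space under left-multiplication by $P_\sigma^\top$ with the usual identification. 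Reading this the other way: if the left action of $P_\sigma^\top$ maps $\mathcal{U}_{S_1}$ onto $\mathcal{U}_{S_2}$, then the column space of $P_\sigma H_{S_1}$ coincides with that of $H_{S_2}$, so each is obtained from the other by right-multiplication by an invertible $(n+1)\times(n+1)$ matrix; one checks this matrix has the correct last-row/last-column block structure because both $H_{S_1}$ and $H_{S_2}$ have the all-one column $\mathbf{j}$ as last column, and this is precisely the affine map we need. That proves part (ii): $\sigma$ comes from an isomorphism if and only if $P_\sigma^\top$ intertwines $\mathcal{U}_{S_1}$ and $\mathcal{U}_{S_2}$.

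The one point requiring a little care --- and the place I would slow down --- is checking that the invertible matrix $A$ produced in the converse directions genuinely has the block form $\left[\begin{smallmatrix}A'&0\\ b&1\end{smallmatrix}\right]$, i.e. that it represents an \emph{affine} map and not merely an arbitrary linear map of $\GF{2}^{n+1}$. This is where the all-one last column $\mathbf{j}$ of $H_S$ does the work: the equation $H_{S_1}A = P_\sigma H_{S_2}$ forces the last column of $H_{S_1}A$ to be $\mathbf{j}$ (since $P_\sigma H_{S_2}$ has last column $P_\sigma\mathbf{j}=\mathbf{j}$), and because the rows of $H_{S_1}$ affinely span $\GF{2}^n\oplus 1$ this pins down the last column of $A$ to be $(0,\dots,0,1)^\top$, which is exactly the affine condition. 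Everything else is linear algebra over $\GF{2}$ with no genuine obstacle; the uniqueness of $\sigma$ given $A$ (and vice versa, given full rank) is what makes the permutation-action viewpoint legitimate, and that was already observed in the paragraph preceding the lemma.
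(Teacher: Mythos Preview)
Your proof is correct and follows essentially the same route as the paper's: both treat (i) and the forward direction of (ii) as routine, and for the converse of (ii) argue that if $P_\sigma^\top$ carries $\mathcal{U}_{S_1}$ to $\mathcal{U}_{S_2}$ then there is an invertible $(n+1)\times(n+1)$ matrix $B$ with $P_\sigma^\top H_{S_1} B = H_{S_2}$, after which the all-one last column $\mathbf{j}$ together with full column rank forces the last column of $B$ to be $(0,\dots,0,1)^\top$, giving $B$ the affine block shape $\left[\begin{smallmatrix}A'&0\\ b&1\end{smallmatrix}\right]$. The only slip is that in your converse you write ``the column space of $P_\sigma H_{S_1}$'' where you mean $P_\sigma^\top H_{S_1}$; the argument is otherwise sound.
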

\begin{proof}
(i) and the ``only if'' part of (ii) are obvious. Let $\sigma$ be a permutation such that $P_\sigma^\top$ maps $\mathcal{U}_{S_1}$ to $\mathcal{U}_{S_2}$. Then, the column spaces of $P_\sigma^\top H_{S_1}$ and $H_{S_2}$ are equal, and there is a matrix $B$ with $P_\sigma^\top H_{S_1} B= H_{S_2}$. Let $(b_1,\ldots,b_{n+1})^\top$ be the last column of $B$. Since both $P_\sigma^\top H_{S_1}$ and $H_{S_2}$ have last column $\mathbf{j}$, and the matrices have full rank $n+1$, we have $b_1=\cdots=b_n=0$ and $b_{n+1}=1$. Hence, $B$ has shape $\begin{bmatrix} A&0\\b&1 \end{bmatrix}$, and it is an isomorphism $S_1\to S_2$. 
\end{proof}

When the permutation $\sigma$ is given, the previous lemma enables a fast check if it induces a Sidon set isomorphism. To reduce the number of permutations to be checked, we assign a bipartite graph to the Sidon set in an invariant way. Notice that in our representation, for any subset $B$ of a Sidon set $S$, $1\leq |B|\leq 5$ implies $\sum_{x\in B} x \neq 0$. 

\begin{definition}
Let $S$ be a Sidon set of $\GF{2}^n\oplus 1$. We define 
\[\mathcal{D}_S = \{ B\subseteq S \mid |B|=6, \; \sum_{x\in B} x=0\},\]
and the bipartite graph $\Gamma_S$ with vertex set $V(\Gamma_S)=S\cup \mathcal{D}_S$, and edge set 
\[E(\Gamma_S)=\{\{x,B\} \mid x\in B\}.\]
\end{definition}

\begin{lemma}
Let $S_1,S_2$ be a full rank Sidon sets of $\GF{2}^n\oplus 1$. An isomorphism $\alpha:S_1\to S_2$ induces a graph isomorphism $\Gamma_{S_1}\to \Gamma_{S_2}$. \qed
\end{lemma}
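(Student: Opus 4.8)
The plan is to observe that, in the matrix model set up just above, an isomorphism of full‑rank Sidon sets is simply the restriction of an invertible \emph{linear} map of the ambient space $\GF{2}^{n+1}=\GF{2}^n\oplus\GF{2}$, and that such a map automatically respects the only data encoded in $\Gamma_S$: membership in $S$ and vanishing of six‑element subset sums.

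First I would invoke Lemma~\ref{lm:perm-to-iso}(i) (equivalently, the definition of isomorphism together with the embedding $\epsilon$): the isomorphism $\alpha\colon S_1\to S_2$ is induced by an invertible $(n+1)\times(n+1)$ matrix over $\GF{2}$ of block shape $\begin{bmatrix} A&0\\ b&1\end{bmatrix}$, acting on row vectors on all of $\GF{2}^{n+1}$. Write $\tilde\alpha$ for this linear bijection; then $\tilde\alpha(0)=0$, and $\tilde\alpha$ restricts on $S_1\subseteq\GF{2}^n\oplus 1$ to the bijection $\alpha\colon S_1\to S_2$. In particular $\alpha$ is injective, hence preserves cardinalities of subsets, and for every finite $B\subseteq S_1$ one has $\sum_{y\in\alpha(B)}y=\tilde\alpha\bigl(\sum_{x\in B}x\bigr)$.

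Next I would check that $\alpha$ carries $\mathcal{D}_{S_1}$ onto $\mathcal{D}_{S_2}$. If $B\in\mathcal{D}_{S_1}$, then $\alpha(B)\subseteq S_2$, $|\alpha(B)|=|B|=6$, and $\sum_{y\in\alpha(B)}y=\tilde\alpha\bigl(\sum_{x\in B}x\bigr)=\tilde\alpha(0)=0$, so $\alpha(B)\in\mathcal{D}_{S_2}$. Since $\alpha^{-1}$ is again an isomorphism $S_2\to S_1$ (induced by the inverse matrix, of the same block shape), the same computation gives $\alpha^{-1}(\mathcal{D}_{S_2})\subseteq\mathcal{D}_{S_1}$; combined with injectivity, $B\mapsto\alpha(B)$ is a bijection $\mathcal{D}_{S_1}\to\mathcal{D}_{S_2}$.

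Finally I would assemble the vertex map $\bar\alpha\colon V(\Gamma_{S_1})\to V(\Gamma_{S_2})$ by $\bar\alpha(x)=\alpha(x)$ for $x\in S_1$ and $\bar\alpha(B)=\alpha(B)$ for $B\in\mathcal{D}_{S_1}$. By the previous two steps it is a bijection respecting the bipartition $S\cup\mathcal{D}_S$, and because $\alpha$ is injective, $x\in B\iff\alpha(x)\in\alpha(B)$, so $\bar\alpha$ sends edges to edges and non‑edges to non‑edges; hence it is a graph isomorphism $\Gamma_{S_1}\to\Gamma_{S_2}$. There is no genuine obstacle here; the only point requiring care is the passage from ``isomorphism of Sidon sets'' to ``linear map of $\GF{2}^{n+1}$ fixing $0$'', which is exactly what the matrix representation and Lemma~\ref{lm:perm-to-iso} supply, and the observation that injectivity prevents a $6$-set from collapsing under $\alpha$.
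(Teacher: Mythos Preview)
Your argument is correct and is exactly the intended, straightforward verification: an isomorphism of Sidon sets is, after the embedding, a linear bijection of $\GF{2}^{n+1}$, and linearity together with injectivity preserves $6$-subsets with vanishing sum, hence the incidence structure of $\Gamma_S$. The paper itself gives no proof of this lemma (it is marked with a bare \qed), so there is nothing further to compare; you have simply written out the details the paper leaves implicit.
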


The following algorithms reduce the isomorphism problem of Sidon sets to the graph isomorphism problem (GI). Using efficient solvers for the Graph Isomorphism Problem (GI), we can implement a practical method to compute isomorphisms and automorphism of Sidon sets up to dimension 15. 

\begin{algorithm}
\caption{Automorphism group of a full-rank Sidon set in $\GF{2}^n\oplus 1$}\label{alg:sidon-aut}
\begin{algorithmic}[1]
\Function{AutSidon}{$S$}
\State Compute the bipartite graph $\Gamma_S$
\State $G \gets \Aut(\Gamma_S)$ 
\State $\tilde{G} \gets $ the $|S|$-dimensional permutation matrix action of $G$
\State $\tilde{G}_0 \gets $ the stabilizer of $\mathcal{U}_S$ in $\tilde{G}$
\State \Return the $(n+1)$-dimensional matrix group describing the linear action of $\tilde{G}_0$ on $\mathcal{U}_S$
\EndFunction
\end{algorithmic}
\end{algorithm}

Typically, the computation of $\Gamma_S$ takes the most time for both Algorithms \ref{alg:sidon-aut} and \ref{alg:sidon-isom}. The computation of a subspace stabilizer in a matrix group is challenging in general. In our case, $\tilde{G}\cong \Aut(\Gamma_S)$ is a relatively small group, and $\mathcal{U}_S$ has a short $\tilde{G}$-orbit. 

\begin{algorithm} 
\caption{Isomorphisms between full-rank Sidon sets in $\GF{2}^n\oplus 1$} \label{alg:sidon-isom}
\begin{algorithmic}[1]
\Function{IsomSidon}{$S_1,S_2$}
\State Compute the bipartite graphs $\Gamma_{S_1},\Gamma_{S_2}$
\State $\varphi \gets$ an isomorphism $\Gamma_{S_1} \to \Gamma_{S_2}$
\If{$\varphi$ fails}
	\State \Return fail
\EndIf
\State $U \gets $ the image of $\mathcal{U}_{S_1}$ under $P_\varphi$
\State $G \gets \Aut(\Gamma_{S_2})$ 
\State $\tilde{G} \gets $ the $|S|$-dimensional permutation matrix action of $G$
\If{$U$ is in the $\tilde{G}$-orbit of $\mathcal{U}_{S_2}$}
	\State $\tilde{\alpha} \gets$ element of $\tilde{G}$ which maps $U$ to $\mathcal{U}_{S_2}$
	\State $\alpha\gets$ preimage of $\tilde{\alpha}$ in $G$
	\State \Return $\alpha\circ \varphi$
\Else
	\State \Return fail
\EndIf
\EndFunction
\end{algorithmic}
\end{algorithm}

\printbibliography

\end{document}